\documentclass{amsart}
\usepackage[english]{babel}
\usepackage[latin1]{inputenc}
\usepackage[dvips,final]{graphics}
\usepackage{amsmath,amsfonts,amssymb,amsthm,amscd,array,stmaryrd,mathrsfs, mathdots, epigraph}
\usepackage{pstricks}
 \usepackage[all]{xy}
 \usepackage{url}
\usepackage{multirow, blkarray}
\usepackage{booktabs}
\usepackage{textcomp}
 \usepackage[final]{epsfig}
 \usepackage{color}
\vfuzz2pt 
\hfuzz2pt 
\setlength{\textwidth}{15truecm}
\setlength{\hoffset}{-1.5truecm}

\theoremstyle{plain}
\newtheorem{thm}{Theorem}
\newtheorem{lem}{Lemma}[section]
\newtheorem{cor}[lem]{Corollary}

\newtheorem{defn}[lem]{Definition}

\theoremstyle{definition}
\newtheorem{rem}[lem]{Remark}
\newtheorem{ex}[lem]{Example}





\newcommand{\Z}{\mathbb{Z}}
\newcommand{\C}{\mathbb{C}}
\newcommand{\N}{\mathbb{N}}



\newcommand{\Qc}{\mathcal{Q}} 
\newcommand{\Rc}{\mathcal{R}}



\newcommand{\id}{\textup{Id}}

\newcommand{\SL}{\mathrm{SL}}

\newcommand{\OSp}{\mathrm{OSp}}
\newcommand{\SSL}{\mathrm{SL}}

\def\a{\alpha}
\def\b{\beta}
\def\d{\delta}

\def\e{\varepsilon}

\def\g{\gamma}

\def\om{\omega}


\hyphenation{para-metri-sation}

\begin{document}

\title{A step towards cluster superalgebras}

\author{Valentin Ovsienko}

\address{
Valentin Ovsienko,
CNRS,
Laboratoire de Math\'ematiques 
U.F.R. Sciences Exactes et Naturelles 
Moulin de la Housse - BP 1039 
51687 REIMS cedex 2,
France}

\email{valentin.ovsienko@univ-reims.fr}

\keywords{Cluster algebra, Supercommutative algebra, Quiver mutations,
Integer sequences, Dual numbers}


\begin{abstract}
We introduce a class of commutative superalgebras 
generalizing cluster algebras.
A cluster superalgebra is defined by a hypergraph called an ``extended quiver'', 
and  transformations called mutations.
We prove the super analog of the ``Laurent phenomenon'', 
i.e., that all elements of a given cluster superalgebra are Laurent polynomials
in the initial variables,
and find an invariant presymplectic form.
Examples of cluster superalgebras are provided by superanalogs of
Coxeter's frieze patterns.
We apply the Laurent phenomenon to
construct a new integer sequence extending the Somos-$4$ sequence.
\end{abstract}

\maketitle

\tableofcontents

\section*{Introduction}

Commutative superalgebras appear naturally in classical geometry.
One classical example is the classical Grassmann algebra of differential forms
on the manifold.
Commutative superalgebras are an essential part of any cohomology theory. 
Lie superalgebras also appear in physics where they are extensively used
to describe symmetries of elementary particles. 
Theory of supermanifolds was developped in 70's 
by Berezin and Leites, see~\cite{Ber,DM,Man1}
 as well as \cite{Lei,Man,LeiR}.
This theory offers 
geometry and analysis based on spaces ringed by superalgebras.
Commutative superalgebras can be regarded as a 
special class of non-commutative or ``quantum'' algebras,
their main feature is the presence of odd, or ``Grassmann''
elements that square to zero.

\medskip

\paragraph{\bf Classical cluster algebras}
Cluster algebras, discovered by Fomin and Zelevinsky~\cite{FZ1}, 
are a special class of commutative associative algebras.
A cluster algebra is a subalgebra of the algebra generated by
Laurent polynomials with positive integer coefficients in
$\Z[x^{\pm1}_1,\ldots,x^{\pm1}_n]$.
A cluster algebra is usually defined with the help of a quiver (an oriented graph)
with no loops and no $2$-cycles;
the generators of the algebra are defined with the help of
{\it exchange relations} and {\it mutations} of the initial quiver.
More precisely, the vertices of the initial quiver are labeled by
$\{x_1,\ldots,x_n\}$;
the mutation at a chosen variable~$x_k$
changes the quiver and replaces $x_k$ with the new variable
$$
x_k'=\frac{M_1+M_2}{x_k},
$$
where $M_1$ and $M_2$ are the monomials
obtained as products of variables connected to~$x_k$
by ingoing and outgoing arrows, respectively.
The above formula is called an
exchange relation; a cluster algebra is generated by all possible
mutations and exchange relations.

Mutations of the quiver at a chosen vertex 
are transformations that can be illustrated by the following diagram:
$$
 \xymatrix{
\bullet\ar@{->}[r]&\star&
\bullet\ar@{<-}[l]
}
\qquad
\Longrightarrow
\qquad
 \xymatrix{
\bullet\ar@{<-}[r]\ar@/^-0.7pc/[rr]&\star&
\bullet\ar@{->}[l]\\
}
$$
The most important point is the creation of a new arrow\footnote{
Exactly the same mutation rules are intensively used in physics
under the name of Seiberg Duality, cf.~\cite{BD},
for a recent treatment, see~\cite{Phy} and references therein.}.

Cluster algebras naturally appear in algebra, geometry and combinatorics.
The algebras of regular functions of many algebraic varieties, related to Lie theory, have cluster structure.
The main examples are simple Lie groups~\cite{BFZ,GSV2},
Grassmannians~\cite{Sco}, flag varieties, etc.
Cluster algebras are closely related to
symplectic and Poisson geometry~\cite{GSV1,GSV},
representation theory, Teichm\"uller spaces~\cite{FG} and various moduli spaces.
For more information and references, see surveys~\cite{Mar,Wil}.

Cluster algebras are also closely related to
integrable systems.
The Hirota (or octahedral) recurrence, see~\cite{Zab},
which is a famous and ``universal'' discrete integrable system,
is an example of cluster exchange relations.
Many spaces on which discrete integrable systems act,
such as moduli spaces of configurations of points in projective spaces,
have cluster structures, see~\cite{MGOT,SVRS}.
Among a wealth of notions related to these moduli spaces, we mention
frieze patterns, see~\cite{Cox,CaCh}.
Moduli spaces of configurations of points are also related to 
the spaces of linear difference equations with special periodicity conditions;
for a detailed account, see~\cite{SVRS}.
For recent developments in this area, 
see~\cite{FM,GSTV1,Gli,FH,MG,GK,OST,Kr}.

\medskip

\paragraph{\bf Towards cluster superalgebras}
A challenging problem is to develop a notion of
cluster superalgebra.
This paper is a first step in this direction.
The general idea is to include odd, or Grassmann, indeterminates
$\{\xi_1,\ldots,\xi_m\}$, that anticommute with each other,
and in particular, square to zero.
The cluster superalgebra is a certain subalgebra of
$$
\C[x^{\pm1}_1,\ldots,x^{\pm1}_n,\xi_1,\ldots,\xi_m].
$$
A cluster superalgebra is defined by an ``extended quiver'' which is a hypergraph extending
 a classical quiver.
The main ingredients are modified exchange relations and quiver mutations.
The vertices of the classical quiver are labeled by the even variables,
the new vertices are labeled by the odd variables.
Essentially, the mutations of an extended quiver
(observe that the two upper vertices are colored while the three lower ones are black) are defined
by the following rule:
$$ 
\xymatrix{
{\color{red}\bullet}\ar@<1pt>@{->}[rd]&&
{\color{red}\bullet}\ar@<-1pt>@{<-}[ld]\\
\bullet&\star\ar@{<-}[l]&\bullet\ar@{<-}[l]
}
\qquad
\Longrightarrow
\qquad
 \xymatrix{
{\color{red}\bullet}\ar@<1pt>@{<-}[rd]\ar@<1pt>@{->}[rrd]&&
{\color{red}\bullet}\ar@<-1pt>@{->}[ld]\ar@{<-}[d]\\
\bullet\ar@/^-0.7pc/[rr]&\star\ar@{->}[l]&\bullet\ar@{->}[l]
}
$$
\smallskip

\noindent
Note that this rule has similarity with mutation of colored graphs,
see~\cite{BT,MP}.

\medskip

\paragraph{\bf Comments}
It should be stressed that the algebras constructed in this paper 
are not particular cases of known versions of cluster algebras.
In particular a cluster super algebra is not a quantum cluster algebra~\cite{BZ},
or other known non-commutative analogs of cluster algebras.

We believe that the definition of cluster superalgebra can be more general.
The main limitations of our construction are:

-absence of arrows between odd vertices;

-lack of exchange relations for the odd variables;

-lack of geometric examples.

\medskip

It would be interesting to look for more general definitions,
as well as for more examples of cluster superalgebras and related
supermanifolds.
In particular, it would be interesting to look for structures of cluster superalgebra
on the space of regular functions of classical supergroups, similar to 
cluster algebra structures of \cite{BFZ}.

It would also be interesting to look for examples related to super Lie theory.
However, a naive attempt to use Berezinians and condensation formulas
encounters some difficulties.
The series of supergroups $\SL(n|m)$ lead to the classical cluster algebras, 
while the case of $\OSp(n|m)$ is much more difficult.

Our approach is based on relations of cluster algebras to moduli spaces, 
linear difference equations,
and frieze patterns.
Supersymmetric linear difference equations
 with some special monodromy conditions and the corresponding {\it superfriezes},
analogous to Coxeter's frieze patterns, were
recently introduced in~\cite{SFriZ}.
Our definition of cluster superalgebras suggested in the present paper
is a result of analysis of superfriezes.
We consider a class of examples of cluster superalgebras ``of type $A$'' that
correspond to superfriezes.

\medskip

\paragraph{\bf Structure of the paper}

The paper consists of seven sections.

In introductory Section~\ref{ClasSec}, we briefly review classical cluster algebras,
and show how to recover the rules of mutation of a quiver from
the canonical presymplectic form and exchange relations.

In Section~\ref{DeFCluSS}, we introduce the notion of extended quiver
and the rules of mutation.

In Section~\ref{Echangist}, we define the exchange relations
generalizing the classical exchange relations to the presence of odd variables.
We also introduce the presymplectic form and formulate the main results of the paper:
the Laurent phenomenon and 
invariance of the presymplectic form.
We finally define cluster superalgebras.

In Section~\ref{ElExSec},
we consider a list of examples.

In Section~\ref{SFS}, we present an infinite series of examples
of cluster superalgebras, related to the notion of
``superfriezes'' and difference equations studied in~\cite{SFriZ}.
Observe once more that superfriezes is the origin of the general notion of cluster superalgebra
developed in the present paper.

In Section~\ref{PrSect}, we prove 
the results formulated in Section~\ref{Echangist}.

In Sections~\ref{ApplSect} and ~\ref{FiboSect}, we present number theoretic applications.

\section{Classical cluster algebras and their presymplectic form}\label{ClasSec}
This section is a very elementary introduction to cluster algebras;
our main goal is to explain the role of the canonical presymplectic $2$-form.

A {\it cluster algebra of rank} $n$ is a commutative associative algebra
associated to a {\it quiver}, i.e., a  finite oriented graph, $\Qc$, with no loops and no $2$-cycles.
We use the common notation: $\Qc_0$ is the set of vertices,
and $\Qc_1$ is the set of arrows of $\Qc$.
The vertices of $\Qc$ are labeled by commuting variables
$\{x_1,\ldots,x_n\}$, where $|\Qc_0|=n$.

\subsection{The definition}\label{ClDeS}
The cluster algebra defined by $\Qc$ is generated by $\{x_1,\ldots,x_n\}$
and the additional rational functions $x'_k,\ldots$ obtained recursively by
a sequences of transformations called {\it mutations}:
$$
\mu_k:
\left(
\{x_1,\ldots,x_n\},\Qc
\right)\mapsto
\left(
\{x_1,\ldots,x_n\}\setminus\{x_k\}\cup\{x_k'\},\Qc'
\right).
$$
For every $k\in\{1,\ldots,n\}$, the mutation at vertex $x_k$
replaces the variable $x_k$ by the new 
function~$x_k'$ defined by the formula:
$$
x_kx'_k=\prod\limits_{\substack{x_k\to x_j}}\;x_j
 \quad+\quad 
\prod\limits_{\substack{x_i\to x_k}}\; x_i,
$$
where the products are taken over the set of arrows
$(x_i\rightarrow x_k)\in\Qc_1$ and $(x_k\to x_j)\in\Qc_1$, respectively
(with fixed $k$).
The above formula is
called an \textit{exchange relation}.

The mutation of the quiver $\mu_k:\Qc\to\Qc'$
at the vertex $x_k$ is defined
according to the following three rules:
\begin{enumerate}
\item for every path $(x_i\rightarrow x_k \rightarrow x_j)$ in $\Qc$, add an arrow $(x_i\rightarrow x_j)$;
\item reverse all the arrows leaving or arriving at $x_k$;
\item remove all 2-cycles created by rule (1).
\end{enumerate}

Every mutation is an involution, i.e., $\mu_k^2=\id$, but two different mutations
do not necessarily commute.
Applying all possible mutations,
one obtains (possibly infinitely many) different quivers, and rational functions $x'_k,\ldots$
in the initial coordinates $\{x_1,\ldots,x_n\}$.
The cluster algebra associated with the initial quiver $\Qc$,
is the associative commutative algebra generated by all the functions obtained
in the mutation process.

Some of the initial cluster coordinates are called {\it frozen},
and remain unchanged.
One simply does not consider mutations at frozen coordinates.
These coordinates are additional parameters
of the cluster algebra.

\medskip

Among the main results of Fomin and Zelevinsky are:

\begin{itemize}
\item
\textit{Laurent phenomenon}:  
all elements of a given cluster algebra are Laurent polynomials in
 $\{x_1,\ldots,x_n\}$, see~\cite{FZ1},\cite{FZ2}.
\item
\textit{A finite type classification}:
a cluster algebra is of finite type, i.e., finitely generated,
if and only if the quiver $\Qc$ is equivalent under a sequence of
mutations to one of the Dynkin quivers of type~$A,D$, or~$E$, see~\cite{FZ3}.
\end{itemize}

\noindent
Furthermore, the {\it positivity property} conjectured in~\cite{FZ1}
and recently proved in \cite{LS},
states that the coefficients of the Laurent monomials in 
all the generators of a cluster algebra are non-negative integers.

\subsection{The canonical presymplectic form}\label{GSVFSec}
Every cluster algebra (and thus every algebraic manifold 
whose algebra of regular functions has a cluster algebra structure) 
has a canonical presymplectic differential $2$-form
introduced by Gekhtman, Shapiro and Vainshtein~\cite{GSV1}.
It is given by
$$
\om=
\sum\limits_{\substack{x_i\to{}x_j}}
\frac{dx_i\wedge{}dx_j}{x_ix_j},
$$
where the summation is performed over the the arrows
$(x_i\rightarrow x_k)\in\Qc_1$.
This form is invariant with respect to the cluster mutations
and therefore is well-defined globally, on every cluster variety.
Note that the form $\om$ is obviously closed since it is with constant coefficients
in coordinates $\log{}x_i$.
However, it is not necessarily non-degenerate, for instance, if~$n$ is odd.

Quite remarkably, among the following three data:
$$
\left(
\hbox{exchange relations},
\qquad
\hbox{quiver mutations},
\qquad
\hbox{canonical $2$-form $\om$}
\right)
$$
every two contain the full information, and allow one to recover the third ingredient.

Let us show how to recover the quiver mutations
from the exchange relations and the invariant $2$-form $\om$.
In order to make the exposition elementary,
we first consider a simple example.

\begin{ex}
Consider the cluster algebra associated to the Dynkin quiver $A_3$:
$$
 \xymatrix{
x\ar@{->}[r]&y&
z\ar@{<-}[l]
}
$$
The corresponding $2$-form is
$$
\om=\frac{dx\wedge{}dy}{xy}+\frac{dy\wedge{}dz}{yz},
$$
and the exchanging relation for $y$ is
$y'=(x+z)/y$.
Expressing then $y=(x+z)/y'$, and substituting into $\omega$, one obtains
from the Leibniz rule:
$$
\begin{array}{rcl}
\om&=&
\displaystyle
\frac{dx\wedge{}dz}{x(x+z)}
-\frac{dx\wedge{}dy'}{xy'}+\frac{dx\wedge{}dz}{(x+z)z}
-\frac{dy'\wedge{}dz}{y'z}\\[10pt]
&=&
\displaystyle
\frac{dx\wedge{}dz}{xz}+\frac{dy'\wedge{}dx}{xy'}+\frac{dz\wedge{}dy'}{y'z}.
\end{array}
$$
Invariance of $\om$ means that, after the above coordinate
transformation, $\om$ is exactly the form of the quiver 
$$
 \xymatrix{
x\ar@{<-}[r]\ar@/^-0.7pc/[rr]&y'&
z\ar@{->}[l]\\
}
$$
obtained as a result of the mutation at
the vertex $y$.
\end{ex}

The general situation is similar.
Rewrite the form~$\om$ 
using the summation over vertices of~$\Qc$. 
One immediately obtains from the Leibniz rule:
$$
\om=\frac12
\sum\limits_{\substack{x_k\in\Qc_0}}
\frac{d\frac{x_{I_k}}{x_{J_k}}\wedge{}dx_k}{\frac{x_{I_k}}{x_{J_k}}\,x_k},
$$
where 
$$
x_{I_k}=x_{i_1}\cdots{}x_{i_s},
\qquad
x_{J_k}=x_{j_1}\cdots{}x_{j_t}
$$ 
are the monomials obtained as the products of coordinates connected to $x_k$
by ingoing arrows: $(x_i\to{}x_k)$, and by outgoing arrows:
$(x_k\to{}x_j)$, respectively.

Invariance of the latter formula for $\om$ under mutations at $x_k$ is quite obvious.
Indeed, temporarily using the notation $\widetilde{x_k}':=\frac{x_k'}{x_{J_k}}$,
rewrite the exchange relations in the form:
$$
x_k=
\frac{1}{\widetilde{x_k}'}\left(1+\frac{x_{I_k}}{x_{J_k}}\right).
$$
Let also
$$
\om_{x_k}=\frac12\frac{d\frac{x_{I_k}}{x_{J_k}}\wedge{}dx_k}{\frac{x_{I_k}}{x_{J_k}}\,x_k}
$$ 
be the term of $\om$ containing $x_k$.
Substituting the above equation for $x_k$,
and by the Leibniz rule one immediately obtains:
$$
\begin{array}{rcl}
\om_{x_k}
&=&
\displaystyle
\frac12\frac{d\widetilde{x_k}'\wedge{}d\frac{x_{I_k}}{x_{J_k}}}{\widetilde{x_k}'\frac{x_{I_k}}{x_{J_k}}}\\[14pt]
&=&
\displaystyle
\frac12\left(
\frac{dx_k'\wedge{}d\frac{x_{I_k}}{x_{J_k}}}{x_k'\frac{x_{I_k}}{x_{J_k}}}-
\frac{dx_{J_k}\wedge{}dx_{I_k}}{x_{I_k}x_{J_k}}\right).
\end{array}
$$
The first term corresponds to the reversed arrows at $x_k'$,
while the second term corresponds to the extra arrows $(x_j\to{}x_i)$.
Invariance of $\om$ therefore implies the rules (1) and (2) of quiver mutations.

The Gekhtman-Shapiro-Vainshtein form $\om$, and related family of
Poisson structures, play an important role in applications.
It equips cluster algebraic varieties with
additional geometric structures.
A similar presymplectic form will be crucial for us to understand
mutations of extended quivers.

\section{Extended quivers and their mutations}\label{DeFCluSS}

We introduce extended quivers, and describe their mutation rules.
It turns out that an extended quiver is not a graph but
an oriented {\it hypergraph}.
More precisely, given a quiver $\Qc$,
we add new, colored (or odd), vertices, and complete
the set of edges of $\Qc$ by adding
some $2$-paths joining three vertices.

The reason for this notion is the general idea of superalgebra and supergeometry,
that supersymmetric version of every object should be understood as its ``square root''.
The notion of extended quiver is an attempt to apply this idea in combinatorics:
a square root of an edge in a graph is understood as a $2$-path joining two
odd vertices through an even vertex.

\subsection{Introducing extended quivers}\label{IntroQuiS}

\begin{defn}
\label{AllGraphDef}
Given a quiver $\Qc$  with no loops and no $2$-cycles,
an {\it extended quiver} $\widetilde{\Qc}$ with {\it underlying} quiver $\Qc$, 
is an oriented hypergraph defined as follows.
\begin{enumerate}
\item[{\bf (A)}]
The vertices of $\Qc$ are labeled by $\{x_1,\ldots,x_n\}$,
$\widetilde{\Qc}$ has $m$ extra ``colored'' vertices labeled by the odd variables 
$\{\xi_1,\ldots,\xi_m\}$, so that
$$
\widetilde{\Qc}_0={\Qc}_0\cup\{\xi_1,\ldots,\xi_m\}.
$$

\item[{\bf (B)}]
Some of the new vertices $\{\xi_1,\ldots,\xi_m\}$ are related 
by $2$-paths through
the vertices $\{x_1,\ldots,x_n\}$ of the underlying quiver $\Qc$.
More precisely,
for every $1\leq{}k\leq{}n$, two non-intersecting subsets 
$$
I_k,J_k\subset\{1,\ldots,m\},
\qquad
I_k\cap{}J_k=\emptyset
$$
are fixed; the set of arrows $\Qc_1$ is completed by the set of $2$-paths:
$$
\widetilde{\Qc}_1=\Qc_1\cup_{k}\left\{(\xi_i\to{}x_k\to\xi_j),\; i\in{}I_k,j\in{}J_k\right\},
$$
so that the following condition is satisfied.

\item[{\bf (C)}] 
Every vertex $\xi_i$, for $i\in{}I_k$, is connected to every vertex
$\xi_j$, for $j\in{}J_k$, by exactly the same number of $2$-paths through $x_k$. 
\end{enumerate}
\end{defn}

Although $\widetilde{\Qc}$ is a hypergraph, and therefore can hardly be represented
graphically, the above definition is illustrated by the following diagram.
To fix the notation, let 
$I_k=\{i_1,\ldots,i_r\},$
and
$J_k=\{j_1,\ldots,j_s\}.$
$$
 \xymatrix{
{\color{red}\xi_{i_1}}\ar@{->}[rrrd]\ar@<3pt>@{->}[rrrd]&\dots&
{\color{red}\xi_{i_r}}\ar@{->}[rd]\ar@<3pt>@{->}[rd]&&
{\color{red}\xi_{j_1}}\ar@{<-}[ld]\ar@<3pt>@{<-}[ld]&\dots&
{\color{red}\xi_{j_s}}\ar@{<-}[llld]\ar@<3pt>@{<-}[llld]\\
&&& x_k&&
}
$$
Here, {\it every} vertex $\xi_i\in\{\xi_{i_1},\ldots,\xi_{i_r}\}$ is connected to
{\it every} vertex $\xi_j\in\{\xi_{j_1},\ldots,\xi_{j_s}\}$ by a certain number of $2$-paths
$(\xi_i\to{}x_k\to\xi_j)$ through $x_k$.
This number is independent of the choice of $i\in I_k$ and $j\in J_k$.

\begin{rem}
(a)
Condition (C) is not just a technical restriction.
Our main results: the Laurent phenomenon
and invariant presymplectic form, hold if and only if this condition is satisfied.

(b)
Note that we do not consider arrows between the odd vertices of $\widetilde{\Qc}$,
and this is certainly an interesting question whether one can add such arrows and create a more rich
combinatorics of extended quivers.
\end{rem}

\subsection{Quiver mutations}\label{GraphMut}

Let us define the mutation rules of an extended quiver.
These mutations are performed at even vertices.
The rules of mutations are modifications of the classical rules (1), (2) (3),
see Section~\ref{ClDeS}.

\begin{defn}
\label{MutDef}
Given an extended quiver~$\widetilde{\Qc}$ and an even vertex $x_k\in\Qc_0$,
the mutation~$\mu_k$ is defined by the
following rules:
\begin{enumerate}
\item[(0)] 
The underlying quiver $\Qc\subset\widetilde{\Qc}$ mutates
according to the same rules (1), (2), (3) as in the classical case.
The $k$th label changes to $x'_k$ (whose new value will be defined later on).

\item[(1*)] 
Given a $2$-path $(\xi_i\to x_k\to\xi_j)\in\widetilde{\Qc}_1$,
for all $x_\ell\in\Qc_0$
connected to~$x_k$ by an outgoing arrow $(x_k\to{}x_\ell)$, 
add the $2$-paths $(\xi_i\to x_\ell\to\xi_j)$.

\item[(2*)]  Reverse all the $2$-paths through $x_k$, i.e.,
change $(\xi_i\to x_k\to\xi_j)$ to $(\xi_i\leftarrow x_k\leftarrow\xi_j)$.

\item[(3*)]  Remove two-by-two the $2$-paths through $x_k$ (if any)
with opposite orientations, created by rule (1*), i.e.,
$2$-paths $(\xi_i\to{}x_\ell\to\xi_j)$ and 
$(\xi_i\leftarrow{}x_\ell\leftarrow\xi_j)$ cancel each other.
\end{enumerate}
\end{defn}

The above rules can be illustrated by the diagram:
$$ 
\xymatrix{
&{\color{red}\xi_i}\ar@{->}[d]&
{\color{red}\xi_j}\ar@<-2pt>@{<-}[ld]\\
x_m&x_k\ar@{<-}[l]&x_\ell\ar@{<-}[l]
}
\qquad
\stackrel{\mu_k}{\Longrightarrow}
\qquad
 \xymatrix{
&{\color{red}\xi_i}\ar@<2pt>@{->}[rd]\ar@{<-}[d]&
{\color{red}\xi_j}\ar@<-2pt>@{->}[ld]\ar@{<-}[d]\\
x_m\ar@/^-1pc/[rr]&x_k'\ar@{->}[l]&x_\ell\ar@{->}[l]
}
$$

\begin{rem}
To compare the new rule (1*) to the classical mutation rule (1), note that the 
new rule changes the set $\widetilde{\Qc}_1$ of $2$-paths, and not just the set
of arrows, as the old rule (1).
\end{rem}

\subsection{Allowed and forbidden mutations}

A mutation of the extended quiver $\widetilde{\Qc}$
at a vertex $x_k\in\Qc_0$ is {\it allowed} if, in the resulting quiver~$\mu_k(\widetilde{\Qc})$
satisfies Condition (C) from Section~\ref{IntroQuiS}.
The reason why some mutations can violate this condition,
and are therefore forbidden, is due to the fact that
the sets $I_\ell$ and $J_\ell$ attached to a vertex $x_\ell$ connected to
$x_k$ by an arrow $(x_k\to{}x_\ell)$ are transformed:
$$
\mu_k: 
\left\{
\begin{array}{l} 
I_\ell\Rightarrow{}I_\ell\cup{}I_k,\\[2pt]
J_\ell\Rightarrow{}J_\ell\cup{}J_k,
\end{array}
\right.
$$
as a result of the rule (1*).

\begin{ex}
a)
Mutation of the following quiver at $x_1$ is not allowed.
$$
\xymatrix{
{\color{red}\xi_1}\ar@{->}[rd]&
{\color{red}\xi_2}\ar@{<-}[d]&
{\color{red}\xi_3}\ar@{->}[d]&
{\color{red}\xi_4}\ar@{<-}[ld]\\
&x_1\ar@{->}[r]& x_2&
}\qquad
\stackrel{\mu_1}{\not\Longrightarrow}
\qquad
\xymatrix{
{\color{red}\xi_1}\ar@{<-}[rd]\ar@{->}[rrd]&
{\color{red}\xi_2}\ar@{->}[d]\ar@{<-}[rd]&
{\color{red}\xi_3}\ar@{->}[d]&
{\color{red}\xi_4}\ar@{<-}[ld]\\
&x'_1\ar@{<-}[r]& x_2&
}
$$
Indeed, this mutation would produce an extended quiver $\widetilde{\Qc}$
 such that 
$\widetilde{\Qc}_1$ contains exactly two $2$-paths through $x_2$, namely
$(\xi_1\to{}x_2\to\xi_2)$ and $(\xi_3\to{}x_2\to\xi_4)$.
Therefore in this case we have: $I_2=\{1,3\}\,,J_2=\{2,4\}$.
However, 
$\xi_1$ is not connected to $\xi_4$ and 
$\xi_3$ is not connected to $\xi_2$ by a $2$-path through $x_2$,
which violates Condition (C).

b)
The mutation of the following quiver at $x_1$ is not allowed
$$
\xymatrix{
{\color{red}\xi_1}\ar@{->}[d]&
{\color{red}\xi_2}\ar@{<-}[ld]\ar@{->}[rd]&
{\color{red}\xi_3}\ar@{<-}[d]\\
x_1\ar@{->}[rr]&& x_2
}\qquad
\stackrel{\mu_1}{\not\Longrightarrow}
\qquad
\xymatrix{
{\color{red}\xi_1}\ar@{->}[rrd]\ar@{<-}[d]&
{\color{red}\xi_2}\ar@{->}[ld]\ar@<0.5ex>[rd]&
{\color{red}\xi_3}\ar@{<-}[d]\\
x_1\ar@{<-}[rr]&& x_2\ar@<0.5ex>[ul]
}
$$
since the subsets $I_2$ and $J_2$ are not well-defined.

c)
The following mutation is allowed
$$
\xymatrix{
{\color{red}\xi_1}\ar@{->}[d]&
{\color{red}\xi_2}\ar@{<-}[ld]\ar@{<-}[rd]&
{\color{red}\xi_3}\ar@{->}[d]\\
x_1\ar@{->}[rr]&& x_2
}\qquad
\stackrel{\mu_1}{\Longrightarrow}
\qquad
\xymatrix{
{\color{red}\xi_1}\ar@{->}[rrd]\ar@{<-}[d]&
{\color{red}\xi_2}\ar@{->}[ld]\ar@{<-}[rd]&
{\color{red}\xi_3}\ar@{->}[d]\\
x_1\ar@{<-}[rr]&& x_2
}
$$
Indeed, in the resulting extended quiver, $I_2=\{1,3\}\,,J_2=\{2\}$,
and $\widetilde{\Qc}_1$ contains two $2$-paths trough $x_2$, namely
$(\xi_1\to{}x_2\to\xi_2)$ and $(\xi_3\to{}x_2\to\xi_2)$,
so that Condition (C) is satisfied.
\end{ex}

\subsection{Extended quivers with two colored vertices}

Consider an important class of extended quivers that have two 
colored vertices.
This is the first non-trivial example that already leads to
very interesting cluster superalgebras, see Section~\ref{ElExSec}.
$$
 \xymatrix @!0 @R=1.3cm @C=1.1cm
 {
&&{\color{red}\xi_{1}}\ar@{->}[rrd]\ar@{->}[lld]\ar@{<-}[d]&
{\color{red}\xi_{2}}\ar@{<-}[rd]\ar@{<-}[llld]\ar@{->}[ld]&
&
\\
x_1\ar@{->}[r]&x_2\ar@{->}[r]&x_3\ar@{->}[r]&\cdots\ar@{->}[r]& x_n\\
}
$$
In the case of two colored vertices, 
{\it all mutations are allowed}.

\begin{ex}
For instance, one has
$$
 \xymatrix{
{\color{red}\xi_1}\ar@{->}[rd]\ar@{->}[d]&
{\color{red}\xi_2}\ar@{<-}[ld]\ar@{<-}[d]\\
x_1&x_2\ar@{<-}[l]
}
\quad
\stackrel{\mu_1}{\Longrightarrow}
\quad
 \xymatrix{
{\color{red}\xi_1}\ar@<3pt>@{->}[rd]\ar@{->}[rd]\ar@{<-}[d]&
{\color{red}\xi_2}\ar@{->}[ld]\ar@{<-}[d]\ar@<3pt>@{<-}[d]\\
x_1'&x_2\ar@{->}[l]
}
$$
This mutation creates a new $2$-path $(\xi_1\to{}x_2\to\xi_2)$,
so that the resulting extended quiver has two such paths.
\end{ex}

\section{Exchange relations and presymplectic form}\label{Echangist}

We define the exchange relations of the even variables
$\{x_1,\ldots,x_n\}$
corresponding to mutations of extended quivers defined in the previous section.
We introduce a presymplectic form invariant
with respect to mutations of the extended quiver and
the corresponding exchange relations.
We also formulate two maid results of the paper:
the Laurent phenomenon for the exchange relations,
and the invariance property of the presymplectic form. 

We finally define our notion of cluster superalgebra,
as the algebra generated by the variables $\{x_1,\ldots,x_n,\xi_1,\ldots,\xi_m\}$
called the  ``initial seed'',
and all their mutations $x_k',x_k'',\ldots$
defined by the exchange relations.
Note that every cluster superalgebra
has an {\it underlying} classical cluster algebra,
obtained by substituting $\xi_i\equiv0$.

\subsection{Exchange relations}

\begin{defn}
\label{ExchDef}
Given an extended quiver $\widetilde{\Qc}$,
the mutation $\mu_k$ replaces the variable $x_k$ by a new variable, $x_k'$,
other variables remain unchanged:
$$
\mu_k:\{x_1,\ldots,x_n,\xi_1,\ldots,\xi_m\}\to
\{x_1,\ldots,x_n,\xi_1,\ldots,\xi_m\}\setminus\{x_k\}\cup\{x_k'\}.
$$
The new variable is defined by the following formula
\begin{equation}
\label{Mute}
x_kx_k'=
\prod\limits_{\substack{x_k\to x_j }}x_j
\quad+\quad
\Big(1+
\sum\limits_{\substack{\xi_i\to{}x_k\to\xi_j}}\xi_i\xi_j\Big)
\prod\limits_{\substack{x_i\to x_k }}x_i,
\end{equation}
that will be called, as in the classical case, an exchange relation.
\end{defn}

Note that,
after substitution $\xi\equiv0$, the above formula obviously coincides with the
exchange relations for the classical cluster algebra corresponding to the underlying quiver $\Qc$.
The first summand in (\ref{Mute}) is exactly as in the classical case, the second
one is modified.

Our first main result is the Laurent phenomenon.
Let us emphacise that,
since the division by odd coordinates is not well-defined, 
all the Laurent polynomials we consider have denominators
equal to some monomials in $\{x_1,\ldots,x_n\}$.

\begin{thm}
\label{LeurThm}
For every extended quiver~$\widetilde{\Qc}$, all the 
rational functions $x_k', x_k'',\ldots$,
obtained recurrently by any series of consecutive admissible mutations,
 are Laurent polynomials
in the initial coordinates $\{x_1,\ldots,x_n,\xi_1,\ldots,\xi_m\}$.
\end{thm}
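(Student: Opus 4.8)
The plan is to mimic the Fomin--Zelevinsky proof of the classical Laurent phenomenon, following the ``caterpillar lemma'' strategy, but adapted to the presence of the nilpotent odd variables. Since every odd variable $\xi_i$ squares to zero, any Laurent polynomial in $\C[x_1^{\pm1},\ldots,x_n^{\pm1},\xi_1,\ldots,\xi_m]$ can be written uniquely as $f_0 + \sum_{i<j} f_{ij}\,\xi_i\xi_j + (\text{higher even terms})$ with the $f$'s Laurent polynomials in the $x$'s; so a first reduction I would make is to organize everything by ``odd degree''. The degree-zero part of any exchange relation \eqref{Mute} is exactly the classical exchange relation for the underlying quiver $\Qc$, for which the Laurent phenomenon is already known; so the genuinely new content is controlling the coefficients of the monomials $\xi_i\xi_j$ (and, if $m>2$, their products), showing their denominators are monomials in the $x$'s.

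The key technical device will be to prove that any two consecutive mutations $\mu_k$, $\mu_\ell$ behave well, i.e. to establish the analogue of the step where one shows that after mutating at $k$, then $\ell$, then $k$ again (or $\ell$, $k$, $\ell$), the resulting variables are Laurent polynomials with controlled denominators, and then run the induction on a tree of clusters exactly as in the classical argument. Concretely: I would fix a sequence of admissible mutations, set up the binary ``caterpillar'' of clusters, and at each node record not just the cluster variables but also the combinatorial data $(I_k,J_k)$ of the extended quiver --- here Condition (C) is essential, because it is exactly what guarantees that the factor $1+\sum_{\xi_i\to x_k\to\xi_j}\xi_i\xi_j$ appearing in \eqref{Mute} factors, modulo higher odd terms, in a controlled multiplicative way (something like $1+(\sum_{i\in I_k}\xi_i)(\sum_{j\in J_k}\xi_j)$ up to sign conventions), so that products of such factors arising along a path of mutations remain of the same shape. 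Without (C) these factors would not compose correctly and the induction would break, which matches Remark~1.3(a).

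The main obstacle I expect is the ``two mutations at the same vertex do not commute'' bookkeeping combined with the non-commutativity (anticommutativity) of the $\xi$'s: in the classical proof the crucial lemma is that $x_k$ and $x_k'$ (for a double mutation) are coprime as Laurent polynomials, and one must here show the super-analogue, namely that the even parts are coprime and that the odd-degree corrections do not introduce new denominators when one clears $x_k$ from $x_k'x_k''$. I would handle this by writing each relevant variable in the form $x^{-\mathbf{a}}(P_0 + P_{\mathrm{odd}})$ with $P_0$ a polynomial not divisible by the pivot variable (by the classical result) and $P_{\mathrm{odd}}$ a sum of $\xi_i\xi_j$-terms times polynomials, then substituting into the next exchange relation and checking term by term --- the $\xi_i\xi_j$ terms only ever multiply even quantities, so divisibility questions reduce to the already-known classical ones, provided one has tracked the $(I_\ell,J_\ell)\mapsto(I_\ell\cup I_k, J_\ell\cup J_k)$ rule correctly to know exactly which $\xi_i\xi_j$ appear.

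An alternative, possibly cleaner, route I would keep in reserve: exploit the ``square root'' philosophy stressed in Section~\ref{DeFCluSS}. One might try to realize the extended-quiver mutation as a specialization or a restriction of an ordinary cluster-algebra mutation in a larger (commutative) auxiliary algebra --- e.g. replacing each $\xi_i$ by a commuting variable and each factor $1+\sum\xi_i\xi_j$ by a genuine cluster monomial-sum, so that the classical Laurent phenomenon applies directly, and then pass to the quotient by $\xi_i^2$ and the Grassmann relations. If such an embedding can be made compatible with mutations (this is where Condition (C) would again be doing the work, ensuring the auxiliary quiver has no loops/$2$-cycles and mutates consistently), the theorem follows immediately; I would attempt this first and fall back on the direct caterpillar induction if the embedding turns out not to respect mutation at all vertices.
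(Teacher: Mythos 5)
Your fallback plan is essentially the paper's proof: it runs the Fomin--Zelevinsky induction over a sequence of mutations, proves the analogue of their key lemma for the three seeds obtained by $\mu_i$, $\mu_j\circ\mu_i$ and $\mu_i\circ\mu_j\circ\mu_i$, reduces the coprimality statement to the purely even case because the odd corrections are nilpotent, and invokes admissibility (Condition (C)) at exactly the point you identify --- in the explicit expression for the triple mutation the only non-monomial denominator appears multiplied by a product of odd sums that vanishes precisely when the mutation is allowed. The speculative embedding into an ordinary commutative cluster algebra is not what the paper does (and the non-involutivity of the super mutations makes it doubtful it would respect mutation), but since you keep the caterpillar induction as the operative route, your approach matches the paper's.
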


This theorem is proved in Section~\ref{PrLPSect}.

\begin{rem}
\label{SqMutLem}
Note that, unlike the purely even case, the above mutation of $x_k$ is
{\it not an involution},
i.e., if the function $x''_k$ is obtained by
the iterated mutation $\mu_k^2$ at $x_k$,
then $x''_k\not=x_k$.
Using~(\ref{Mute}) twice, one easily obtains:
$$
x''_k=x_k\,
\Big(1-
\sum\limits_{\substack{\xi_i\to{}x_k\to\xi_j}}\xi_i\xi_j\Big).
$$
It is important to notice however, that
this expression belongs to the algebra generated by
the initial variables $\{x_k,\xi_1,\ldots,\xi_m\}$.
Note also that, conversely,
$x_k=x''_k
\Big(1+
\sum\limits_{\substack{\xi_i\to{}x_k\to\xi_j}}\xi_i\xi_j\Big).$
\end{rem}

\subsection{The presymplectic form}
Consider the following differential $2$-form:
\begin{equation}
\label{SupOM}
\displaystyle
\om=
\sum\limits_{\substack{x_i\rightarrow x_j }}
\frac{dx_i\wedge{}dx_j}{x_ix_j}+
\sum\limits_{\substack{\xi_i\to{}x_\ell\to\xi_j}}
\frac{d\left(
\xi_i\xi_j
\right)\wedge
dx_\ell}{x_\ell}.
\end{equation}
Note that the summation is organized over the elements of $\widetilde\Qc_1$.
The first summand is nothing but the classical presymplectic form.

It turns out that the $2$-form~\eqref{SupOM} is invariant under mutations $\mu_k$.
The following statement is our second main result.

\begin{thm}
\label{SymThm}
For every extended quiver~$\widetilde{\Qc}$, the form $\om$
is invariant under mutations $\mu_k$, combined with the exchange relations~(\ref{Mute}).
\end{thm}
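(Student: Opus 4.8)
The plan is to mimic the classical computation reproduced in Section~\ref{GSVFSec}, reorganizing $\om$ as a sum over vertices and checking that the term attached to $x_k$ transforms correctly under $\mu_k$, with the odd contributions tracked alongside the even ones. First I would rewrite \eqref{SupOM} by grouping all terms that involve $x_k$: in logarithmic-type variables the even part near $x_k$ is $\tfrac12\,d\log(x_{I_k}/x_{J_k})\wedge d\log x_k$ as before, while the odd part collects all $2$-paths $(\xi_i\to x_k\to\xi_j)$ into
$$
\Big(\sum_{\xi_i\to x_k\to\xi_j}\xi_i\xi_j\Big)\frac{d\log x_k}{1}\ \text{-type terms},
$$
but more precisely into $\big(d(\sum_{\xi_i\to x_k\to\xi_j}\xi_i\xi_j)\big)\wedge dx_k/x_k$. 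The key algebraic device is the exchange relation \eqref{Mute}, which I would write in the compact ``solved'' form
$$
x_k=\frac{1}{\widetilde{x_k}'}\Big(1+\frac{x_{I_k}}{x_{J_k}}\Big)\Big(1+\sum_{\xi_i\to x_k\to\xi_j}\xi_i\xi_j\Big),
$$
where $\widetilde{x_k}'=x_k'/x_{J_k}$, exactly as in the even case but with the extra odd factor. Here I use that the odd sum $S_k:=\sum\xi_i\xi_j$ is nilpotent, so $1+S_k$ is invertible with inverse $1-S_k$, and that $dS_k\wedge dS_k=0$ and $S_k\,dS_k=0$; these identities are what make the logarithmic differential $d\log(1+S_k)=dS_k$ behave well.

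\textbf{Key steps.} Substituting this expression for $x_k$ into $\om_{x_k}$ (the collection of all terms containing $x_k$) and applying the Leibniz rule, $d\log x_k$ splits as $-d\log\widetilde{x_k}' + d\log(1+x_{I_k}/x_{J_k}) + dS_k$. Distributing this over the even term $\tfrac12\,d\log(x_{I_k}/x_{J_k})\wedge d\log x_k$ reproduces, as in Section~\ref{GSVFSec}, the term for the reversed even arrows at $x_k'$ plus the term $-\tfrac12\,d\log x_{J_k}\wedge d\log x_{I_k}$ accounting for the new arrows $(x_j\to x_i)$; I would verify that the cross-terms involving $dS_k\wedge d\log(x_{I_k}/x_{J_k})$ are precisely what is needed to account for the new odd $2$-paths created by rule (1*) (the $2$-paths $(\xi_i\to x_\ell\to\xi_j)$ for $x_k\to x_\ell$), using that $x_{J_k}$ in the denominator of $\widetilde{x_k}'$ encodes the outgoing even neighbours of $x_k$. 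Simultaneously, distributing $d\log x_k$ over the odd term $dS_k\wedge dx_k/x_k = dS_k\wedge d\log x_k$ gives $dS_k\wedge(-d\log\widetilde{x_k}')$, i.e.\ $dS_k\wedge d\log x_k'$ up to the $x_{J_k}$ correction, which is exactly the reversed odd $2$-paths at $x_k'$ demanded by rule (2*); and the $dS_k\wedge dS_k$ piece vanishes. The remaining bookkeeping is to check that the cancellations of rule (3*) — pairs of oppositely-oriented $2$-paths through some $x_\ell$ — correspond to cancellations $\xi_i\xi_j$ against $\xi_i\xi_j$ with opposite sign in the differential form, which follows because reversing a $2$-path sends $d(\xi_i\xi_j)\wedge dx_\ell/x_\ell$ to its negative. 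Throughout, Condition (C) on $\widetilde{\Qc}$ is what guarantees that the odd sum $S_\ell$ attached to each affected vertex $x_\ell$ is again of the product form $\big(\sum_{i\in I_\ell}\xi_i\big)\big(\sum_{j\in J_\ell}\xi_j\big)$ after the mutation, so that the new $\om$ is literally the form \eqref{SupOM} of $\mu_k(\widetilde{\Qc})$ and not some unstructured $2$-form.

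\textbf{Main obstacle.} I expect the delicate point to be the interaction between rule (1*) and the odd terms: when a new even arrow $(x_i\to x_j)$ is created and simultaneously new $2$-paths $(\xi_i\to x_\ell\to\xi_j)$ appear at a neighbour $x_\ell$, one must check that the coefficients produced by Leibniz on the substituted odd factor $1+S_k$ land with the correct sign and multiplicity to match rules (1*) and (3*) simultaneously, and in particular that the multiplicities demanded by Condition (C) are preserved — i.e.\ that the ``number of $2$-paths'' bookkeeping in the hypergraph is faithfully mirrored by the exterior-algebra bookkeeping $\xi_i\xi_j$. A clean way to organize this, which I would adopt, is to first prove the statement for the minimal local configuration (one vertex $x_k$ with its even neighbours and the $2$-paths through it, as in the illustrative diagram after Definition~\ref{MutDef}), where everything reduces to the identity already established in Section~\ref{GSVFSec} augmented by the nilpotent factor, and then observe that both $\om$ and the mutation rules are ``local'' in the sense that terms not involving $x_k$ or a vertex adjacent to $x_k$ are untouched, so the general case follows by summing the local computations.
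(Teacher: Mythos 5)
Your overall plan — isolate the part $\om_{x_k}$ of \eqref{SupOM} containing $x_k$, substitute the exchange relation solved for $x_k$, expand by the Leibniz rule, and identify the surviving terms with rules (1*), (2*), (3*), using locality for the rest — is exactly the paper's route (Section~\ref{ISFSect}). The problem is your ``key algebraic device'': the exchange relation \eqref{Mute} does \emph{not} factorize. It reads $x_kx_k'=x_{J_k}+(1+S_k)\,x_{I_k}$, with $S_k=\sum_{\xi_i\to x_k\to\xi_j}\xi_i\xi_j$ multiplying only the ingoing monomial $x_{I_k}$, whereas your solved form
$x_k=\frac{1}{\widetilde{x_k}'}\bigl(1+\frac{x_{I_k}}{x_{J_k}}\bigr)\bigl(1+S_k\bigr)$, $\widetilde{x_k}'=x_k'/x_{J_k}$,
is equivalent to $x_kx_k'=(x_{I_k}+x_{J_k})(1+S_k)$; the two differ by the (non-negligible, nilpotent) term $x_{J_k}S_k$, and no relabelling of $I_k,J_k$ repairs this. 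Consequently the clean splitting $d\log x_k=-d\log\widetilde{x_k}'+d\log\bigl(1+\tfrac{x_{I_k}}{x_{J_k}}\bigr)+dS_k$, on which all of your ``key steps'' rest, is not valid for \eqref{Mute}. If you push your substitution through, the odd cross-terms come out as $dS_k\wedge d\log\tfrac{x_{J_k}}{x_{I_k}}+dS_k\wedge d\log\bigl(1+\tfrac{x_{I_k}}{x_{J_k}}\bigr)$, i.e.\ the result differs from the form of $\mu_k(\widetilde{\Qc})$ by the nonzero leftover $dS_k\wedge d\log\bigl((x_{I_k}+x_{J_k})/x_{I_k}\bigr)$; in particular it contains a spurious piece $-dS_k\wedge dx_{I_k}/x_{I_k}$, which would correspond to new $2$-paths at the \emph{ingoing} neighbours of $x_k$ — something rule (1*) never creates (it adds $2$-paths only at vertices $x_\ell$ with $x_k\to x_\ell$).

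The paper keeps the relation additive: $x_k=\frac{1}{\widetilde{x_k}'}\bigl(1+\frac{x_{J_k}}{x_{I_k}}+S_k\bigr)$ with $\widetilde{x_k}'=x_k'/x_{I_k}$, substitutes this into \emph{both} the even term $\frac{dx_k\wedge d(x_{J_k}/x_{I_k})}{x_k\,(x_{J_k}/x_{I_k})}$ and the odd term $\sum d(\xi_i\xi_j)\wedge\frac{dx_k}{x_k}$ of $\om_{x_k}$, and the crux is a cancellation your factorization bypasses: each of the two produces a cross-term with the non-monomial denominator $1+\frac{x_{J_k}}{x_{I_k}}$, namely
$\sum\frac{d(\xi_i\xi_j)\wedge d\frac{x_{J_k}}{x_{I_k}}}{\bigl(1+\frac{x_{J_k}}{x_{I_k}}\bigr)\frac{x_{J_k}}{x_{I_k}}}$ and $\sum\frac{d(\xi_i\xi_j)\wedge d\frac{x_{J_k}}{x_{I_k}}}{1+\frac{x_{J_k}}{x_{I_k}}}$,
and these combine to the clean term $\sum d(\xi_i\xi_j)\wedge\frac{dx_{J_k}}{x_{J_k}}$ — exactly the $2$-paths created by rule (1*) at the outgoing neighbours — while the remaining terms give the reversed arrows and reversed $2$-paths at $x_k'$ (rules (2), (2*)) and the classical term $\frac{dx_{I_k}\wedge dx_{J_k}}{x_{I_k}x_{J_k}}$ (rule (1)). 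So the fix is not cosmetic: you must redo the Leibniz computation with the additive solved form and exhibit this cancellation. (Two minor remarks: your identity $S_k\,dS_k=0$ is not automatic for an arbitrary sum $\sum\xi_i\xi_j$ — it needs the product structure $S_k=c_k\bigl(\sum_{i\in I_k}\xi_i\bigr)\bigl(\sum_{j\in J_k}\xi_j\bigr)$ coming from Condition (C); and your locality argument and the treatment of rules (2*), (3*) are fine and agree with the paper.)
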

This theorem is proved in Section~\ref{ISFSect}

In other words, expressing $x_k$ in terms of the other variables and $x_k'$,
and substituting to~(\ref{SupOM}),
one obtains precisely the presymplectic form associated to the extended quiver 
$\mu_k(\widetilde{\Qc})$.
Geometrically speaking, the above statement means that the form~(\ref{SupOM})
is well-defined on an algebraic supermanifold whose space of regular functions
has a structure of cluster superalgebra.

Moreover, as in the purely even case,
formulas (\ref{Mute}) and (\ref{SupOM}) allow us to recover the rules of quiver mutations
from Definition~\ref{MutDef}.

\begin{rem}
The $2$-form $\om$ given by~(\ref{SupOM}) is not the unique invariant
presymplectic form.
Indeed, adding the (symmetric in $\xi$) terms of the form
$d\xi_i\wedge{}d\xi_j$, for some $i,j$, one obtains a large family
of presymplectic forms.
This is probably equivalent to developing of the theory
of mutations at colored vertices and exchange relations of odd variables.
Only adding such terms one can expect the $2$-form to be non-degenerate
when the underlying form is non-degenerate.
\end{rem}

\subsection{Definition of the cluster superalgebra}

Given an extended quiver~$\widetilde{\Qc}$,
we call the {\it cluster superalgebra} associated to~$\widetilde{\Qc}$,
the supercommutative superalgebra
generated by
the initial variables $\{x_1,\ldots,x_n,\xi_1,\ldots,\xi_m\}$, together with
the functions $x_k', x_k'',\ldots$ that can be obtained recursively by
sequences of admissible mutations.
This superalgebra will be denoted by $A(\widetilde{\Qc})$.

The pair $(\widetilde{\Qc},\{x_1,\ldots,x_n,\xi_1,\ldots,\xi_m\})$ is called 
the initial {\it seed} of the cluster superalgebra.

Given an extended quiver $\widetilde{\Qc}$, following~\cite{FZ1},
we call the {\it exchange graph} the graph~$\widetilde{T}$ whose vertices are 
the initial seed $(\widetilde{\Qc},\{x,\xi\})$ and all its allowed mutations;
the edges are labeled by the numbers from $1$ to $n$.
The graph~$\widetilde{T}$ has no natural orientation.
The projection $\widetilde{\Qc}\mapsto\Qc$ defines the projection $\widetilde{T}\mapsto{}T$,
where $T$ is the usual exchange graph.

\section{Elementary examples}\label{ElExSec}
In this section, we consider a number of concrete examples,
in order to illustrate the exchange relations~(\ref{Mute})
and the mutation rules of extended quivers.
Most of our examples correspond to extended quivers with two colored
vertices.
We observe a failure of $5$-periodicity that occurs in the
classical case.

We show that the Laurent phenomenon holds for all examples we consider,
and sometimes simplifications are quite spectacular.
We also check that the corresponding presymplectic form is, indeed, invariant
in each of our examples. 

\medskip

\paragraph{\bf A brief tutorial on calculus with
odd variables}
One needs to know a few simple rules in order to
work efficiently with odd variables.

Odd variables commute with
the even variables and anticommute with each other.
One has:
$
\xi_1\xi_2=-\xi_2\xi_1.
$
In particular, odd variables square to zero: $\xi_i^2=0$.

We will often need to calculate rational functions
with odd variables in the denominator.
Clearly, division by the odd variables is not well-defined.
However, the obvious formula
$(1+\xi)^{-1}=1-\xi$
makes many calculations possible.
For instance, 
$$
\frac{y}{x+\xi_1\xi_2}=\frac{y}{x(1+\xi_1\xi_2/x)}=\frac{y}{x}-\xi_1\xi_2\frac{y}{x^2}.
$$

\begin{ex}
\label{MirifSupMut1}
Our most elementary example is the extended quiver with one ordinary
and two colored vertices.
Let us first perform consecutive mutations at the even vertex:
$$
 \xymatrix{
{\color{red}\xi_1}\ar@<3pt>@{->}[rd]&&
{\color{red}\xi_2}\ar@<-3pt>@{<-}[ld]\\
& x&
}
\quad
\stackrel{\mu_x}{\Longrightarrow}
\quad
 \xymatrix{
{\color{red}\xi_1}\ar@<3pt>@{<-}[rd]&&
{\color{red}\xi_2}\ar@<-3pt>@{->}[ld]\\
& x'&
}
\quad
\stackrel{\mu_{x'}}{\Longrightarrow}
\quad
 \xymatrix{
{\color{red}\xi_1}\ar@<3pt>@{->}[rd]&&
{\color{red}\xi_2}\ar@<-3pt>@{<-}[ld]\\
& x''&
}
\quad
\stackrel{\mu_{x''}}{\Longrightarrow}
\quad
\cdots
$$
The variable $x$ obeys the following exchange relation:
$$
x'=\frac{2}{x}+\frac{\xi_1\xi_2}{x},
\qquad
x''=
\frac{2}{x'}-\frac{\xi_1\xi_2}{x'}
=x\left(\frac{2}{2+\xi_1\xi_2}-\frac{\xi_1\xi_2}{2+\xi_1\xi_2}\right)
=x\left(1-\xi_1\xi_2\right).
$$
Continuing the process, we obtain:
$$
x'''=\frac{2}{x}+3\frac{\xi_1\xi_2}{x},
\qquad
x^{IV}=x\left(1-2\xi_1\xi_2\right),
\qquad
\ldots
$$
As already mentioned, these mutations are not involutions,
and the resulting process is infinite and aperiodic.

The corresponding cluster superalgebra is
$\C[x^{\pm1},\xi_1,\xi_2]\cong\C[x^{\pm1}]\otimes\C[\xi_1,\xi_2]$.

The presymplectic form corresponding to thisl extended quiver
is:
$$
\om=
\frac{d(\xi_1\xi_2)\wedge{}dx_1}{x_1}.
$$
Substituting $x=\frac{2}{x'}+\frac{\xi_1\xi_2}{x'}$,
and using the fact that $d(\xi_1\xi_2)\wedge{}d(\xi_1\xi_2)=0$,
one has:
$$
\om=
-\frac{d(\xi_1\xi_2)\wedge{}dx_1'}{x_1'},
$$
which corresponds to the reversing of the orientation of the $2$-path
$(\xi_1\to x\to\xi_2)$ by the mutation at $x$.
\end{ex}

\begin{ex}
\label{MirifSupMut0}
Recall (see~\cite{FZ1}) that, in the purely even case of the Dynkin quiver $\Qc=A_2$,
the process of consecutive mutations:
$$
\begin{array}{ccccccccc}
 \xymatrix{
x_1&x_2\ar@{<-}[l]
}
&
\stackrel{\mu_1}{\Longrightarrow}
&
 \xymatrix{
x_1'&x_2\ar@{->}[l]
}
&
\stackrel{\mu_2}{\Longrightarrow}
&
 \xymatrix{
x_1'&x_2'\ar@{<-}[l]
}
&
\stackrel{\mu_1}{\Longrightarrow}
&
 \xymatrix{
x_1''&x_2'\ar@{->}[l]
}
&
\stackrel{\mu_2}{\Longrightarrow}
&
\cdots
\end{array}
$$
 is $5$-periodic:
 $$
 x'_1=\frac{1+x_2}{x_1},
\qquad
x'_2=\frac{1+x_1+x_2}{x_1x_2},
\qquad
x''_1=\frac{1+x_1}{x_2},
\qquad
x''_2=x_1,
\qquad
x'''_1=x_2.
 $$
The corresponding exchange graph is a pentagon,
and the cluster algebra of $A_2$ is
isomorphic to $\C[x_1,x_2,x'_1,x'_2,x''_1].$
\end{ex}

In the next three examples, we consider different quivers that are extensions of the quiver $A_2$.

\begin{ex}
\label{MirifSupMut2}
Consider the following quiver with two even and two odd coordinates
$\{x_1,x_2,\xi_1,\xi_2\}$:
$$
\xymatrix{
{\color{red}\xi_1}\ar@{->}[d]&
{\color{red}\xi_2}\ar@<-1pt>@{<-}[ld]\\
x_1&x_2\ar@{<-}[l]
}
$$
Let us alternate consecutive mutations at even vertices:
$$
\begin{array}{ccccccc}
 \xymatrix{
{\color{red}\xi_1}\ar@{->}[d]&
{\color{red}\xi_2}\ar@<-1pt>@{<-}[ld]\\
x_1&x_2\ar@{<-}[l]
}
&
\stackrel{\mu_1}{\Longrightarrow}
&
 \xymatrix{
{\color{red}\xi_1}\ar@<3pt>@{->}[rd]\ar@{<-}[d]&
{\color{red}\xi_2}\ar@<-2pt>@{->}[ld]\ar@{<-}[d]\\
x_1'&x_2\ar@{->}[l]
}
&
\stackrel{\mu_2}{\Longrightarrow}
&
 \xymatrix{
{\color{red}\xi_1}\ar@<3pt>@{<-}[rd]&
{\color{red}\xi_2}\ar@{->}[d]\\
x_1'&x_2'\ar@{<-}[l]
}
&
\stackrel{\mu_1}{\Longrightarrow}
&
 \xymatrix{
{\color{red}\xi_1}\ar@<3pt>@{<-}[rd]&
{\color{red}\xi_2}\ar@{->}[d]\\
x_1''&x_2'\ar@{->}[l]
}
\\[50pt]
&&&&&&\Downarrow\mu_2\\[10pt]
\cdots
&\stackrel{\mu_1}{\Longleftarrow}&
\xymatrix{
{\color{red}\xi_1}\ar@{->}[d]&
{\color{red}\xi_2}\ar@<-3pt>@{<-}[ld]\\
x_1'''&x_2'''\ar@{<-}[l]
}
&\stackrel{\mu_2}{\Longleftarrow}& 
\xymatrix{
{\color{red}\xi_1}\ar@{->}[d]&
{\color{red}\xi_2}\ar@<-3pt>@{<-}[ld]\\
x_1'''&x_2''\ar@{->}[l]
}
&
\stackrel{\mu_1}{\Longleftarrow}
&
 \xymatrix{
{\color{red}\xi_1}\ar@<3pt>@{->}[rd]\ar@{<-}[d]&
{\color{red}\xi_2}\ar@<-3pt>@{->}[ld]\ar@{<-}[d]\\
x_1''&x_2''\ar@{<-}[l]
}
\end{array}
$$
Let us give some details of the computations.
The first two mutations are obtained immediately from~(\ref{Mute}):
$$
x'_1=\frac{1+x_2}{x_1}+\frac{\xi_1\xi_2}{x_1},
\qquad
x_2'=\frac{1+x_1+x_2}{x_1x_2}+\frac{1+x_1}{x_1x_2}\xi_1\xi_2.
$$
Again, applying the exchange relations~(\ref{Mute}), and performing simplifications,
one obtains:
$$
\begin{array}{rcl}
x_1''&=&
\frac{x_2'+1}{x_1'}\\[4pt]
&=&
\frac{1}{x_2}\left(
\frac{1+x_1+x_2+x_1x_2}{1+x_2+\xi_1\xi_2}
+\frac{\left(1+x_1\right)\xi_1\xi_2}{1+x_2+\xi_1\xi_2}
\right)\\[4pt]
&=&
\frac{1}{x_2}\left(
\frac{1+x_1+x_2+x_1x_2}{1+x_2}-
    \frac{1+x_1+x_2+x_1x_2}{1+x_2}\frac{\xi_1\xi_2}{1+x_2}
+\frac{1+x_1}{1+x_2}\xi_1\xi_2
\right)\\[6pt]
&=&\displaystyle
\frac{1+x_1}{x_2}.
\end{array}
$$
Note that simplifications of the denominators are quite surprising.

Similarly, after some computations, one obtains:
$$
x_2''=x_1\left(1-\xi_1\xi_2\right),
\qquad
x_1'''=x_2\left(1-\xi_1\xi_2\right),
\qquad
x_2'''=\frac{1+x_2}{x_1}+\frac{\xi_1\xi_2}{x_1},
$$
and the next values are:
$$
x_1^{IV}=
\frac{1+x_1+x_2}{x_1x_2}
\left(1+\xi_1\xi_2\right)
+\frac{1+x_1}{x_1x_2}\xi_1\xi_2,
\qquad
x_2^{IV}=\frac{1+x_1}{x_2}\left(1+\xi_1\xi_2\right),
\qquad
\ldots
$$
Remarkably enough, all of the coordinates we obtain by iterating the process,
are Laurent polynomials.
However, the above process is infinite and aperiodic.

Our mutation process is aperiodic and very different from the classical
mutation process of the quiver $A_2$, 
However, analyzing the above formulas, we see the following.
The corresponding cluster superalgebra is generated by five even and two odd variables:
$\{x_1,x_2,x'_1,x'_2,x''_1,\xi_1,\xi_2\}.$
Indeed, all the other mutations of~$x_1$ and~$x_2$ are
algebraic expressions in these functions.

\medskip

The presymplectic form corresponding to the initial extended quiver
$$
\xymatrix{
{\color{red}\xi_1}\ar@{->}[d]&
{\color{red}\xi_2}\ar@<-1pt>@{<-}[ld]\\
x_1&x_2\ar@{<-}[l]
}
$$
is as follows:
$$
\om=
\frac{dx_1\wedge{}dx_2}{x_1x_2}
+\frac{d(\xi_1\xi_2)\wedge{}dx_1}{x_1}.
$$
Let us substitute to this form
$x_1=\frac{1}{x_1'}\left(1+x_2+\xi_1\xi_2\right)$.
One readily gets:
$$
\begin{array}{rcl}
\om&=&\displaystyle
-\frac{dx_1'\wedge{}dx_2}{x_1'x_2}
-\frac{d(\xi_1\xi_2)\wedge{}dx_1'}{x_1'}
+\frac{d(\xi_1\xi_2)\wedge{}dx_2}{1+x_2+\xi_1\xi_2}
+\frac{d(\xi_1\xi_2)\wedge{}dx_2}{(1+x_2+\xi_1\xi_2)x_2}\\[12pt]
&=&\displaystyle
-\frac{dx_1'\wedge{}dx_2}{x_1'x_2}
-\frac{d(\xi_1\xi_2)\wedge{}dx_1'}{x_1'}
+\frac{d(\xi_1\xi_2)\wedge{}dx_2}{x_2}.
\end{array}
$$
The latter expression is precisely the
presymplectic form corresponding to
$$
 \xymatrix{
{\color{red}\xi_1}\ar@<3pt>@{->}[rd]\ar@{<-}[d]&
{\color{red}\xi_2}\ar@<-2pt>@{->}[ld]\ar@{<-}[d]\\
x_1'&x_2\ar@{->}[l]
}
$$
which is the result of the mutation $\mu_1$.
Hence, $\om$ is invariant under this mutation.
A similar computation shows that $\om$ is also invariant under the mutation $\mu_2$, etc. 
\end{ex}

\begin{ex}
\label{MirifSupMut3}
Starting with a similar quiver
and performing the same sequence of mutations, one has:
$$
\begin{array}{ccccccccc}
 \xymatrix{
{\color{red}\xi_1}\ar@{->}[d]&
{\color{red}\xi_2}\ar@<-1pt>@{<-}[ld]\\
x_1&x_2\ar@{->}[l]
}
&\stackrel{\mu_1}{\Longrightarrow}&
 \xymatrix{
{\color{red}\xi_1}\ar@{<-}[d]&
{\color{red}\xi_2}\ar@<-3pt>@{->}[ld]\\
x'_1&x_2\ar@{<-}[l]
}
&\stackrel{\mu_2}{\Longrightarrow}&
 \xymatrix{
{\color{red}\xi_1}\ar@{<-}[d]&
{\color{red}\xi_2}\ar@<-3pt>@{->}[ld]\\
x'_1&x'_2\ar@{->}[l]
}
&\stackrel{\mu_1}{\Longrightarrow}&
 \xymatrix{
{\color{red}\xi_1}\ar@{->}[d]&
{\color{red}\xi_2}\ar@<-3pt>@{<-}[ld]\\
x''_1&x'_2\ar@{<-}[l]
}
&\stackrel{\mu_2}{\Longrightarrow}&\cdots
\end{array}
$$
and then again $\mu_1,\mu_2$, etc.
One then obtains, after computations:
$$
x_1'=\frac{1+x_2}{x_1}
+\frac{x_2}{x_1}\,\xi_1\xi_2,
\qquad
x_2'=\frac{1+x_1+x_2}{x_1x_2}+\frac{\xi_1\xi_2}{x_1},
\qquad
x_1''=\frac{1+x_1}{x_2}\left(1-\xi_1\xi_2\right),
$$
and next:
$$
x_2''=x_1\left(1-\xi_1\xi_2\right),
\qquad
x_1'''=x_2\left(1+\xi_1\xi_2\right),
\qquad
\ldots
$$
The Laurent phenomenon is always verified.

Note that, once again, the above process of consecutive mutations does not lead to $5$-periodicity.
However, once again, the corresponding cluster superalgebra is generated by the
following functions
$\{x_1,x_2,x'_1,x'_2,x''_1,\xi_1,\xi_2\}$.

\medskip

The presymplectic form in this example is:
$$
\om=
-\frac{dx_1\wedge{}dx_2}{x_1x_2}
+\frac{d(\xi_1\xi_2)\wedge{}dx_1}{x_1}.
$$
Substituting to this form
$x_1=\frac{1}{x_1'}\left(1+x_2(1+\xi_1\xi_2)\right)$, one obtains:
$$
\begin{array}{rcl}
\om&=&\displaystyle
\frac{dx_1'\wedge{}dx_2}{x_1'x_2}
-\frac{d(\xi_1\xi_2)\wedge{}dx_1'}{x_1'}
-\frac{x_2d(\xi_1\xi_2)\wedge{}dx_2}{(1+x_2+\xi_1\xi_2)x_2}
+\frac{d(\xi_1\xi_2)\wedge{}dx_2}{(1+x_2+\xi_1\xi_2)x_2}\\[12pt]
&=&\displaystyle
\frac{dx_1'\wedge{}dx_2}{x_1'x_2}
-\frac{d(\xi_1\xi_2)\wedge{}dx_1'}{x_1'}.
\end{array}
$$
Since this expression is nothing but the presymplectic form corresponding to
the following extended quiver:
$$
\xymatrix{
{\color{red}\xi_1}\ar@{<-}[d]&
{\color{red}\xi_2}\ar@<-3pt>@{->}[ld]\\
x'_1&x_2\ar@{<-}[l]
}
$$
We conclude, that the $2$-form $\om$ is, again, invariant 
with respect to the mutation $\mu_1$.
It is not difficult to check that $\om$ is invariant
with respect to all the other mutations
we consider in this example.
\end{ex}

\begin{ex}
\label{MirifSupMut4}
The following extended quiver
with two even and three odd vertices, 
as well as its mutations, will be particularly important for us
(see Section~\ref{SFS} below).
As before, let us perform an infinite sequence of mutations:
$$
\begin{array}{ccccc}
 \xymatrix{
{\color{red}\xi_1}\ar@{<-}[d]\ar@{->}[rrd]&
{\color{red}\xi_2}\ar@{->}[ld]\ar@{<-}[rd]&
{\color{red}\xi_3}\ar@{->}[d]\\
x_1\ar@{->}[rr]&&x_2
}
&
\stackrel{\mu_1}{\Longrightarrow}
&
 \xymatrix{
{\color{red}\xi_1}\ar@{->}[d]&
{\color{red}\xi_2}\ar@{<-}[ld]\ar@{<-}[rd]&
{\color{red}\xi_3}\ar@<-3pt>@{->}[d]\\
x_1'\ar@{<-}[rr]&&x_2
}
&
\stackrel{\mu_2}{\Longrightarrow}
&
\xymatrix{
{\color{red}\xi_1}\ar@{->}[d]&
{\color{red}\xi_2}\ar@{<-}[ld]\ar@{->}[rd]&
{\color{red}\xi_3}\ar@{->}[lld]\ar@{<-}[d]\\
x_1'\ar@{->}[rr]&&x_2'
}
\\[50pt]
&&&&\Downarrow\mu_1\\[10pt]
\xymatrix{
{\color{red}\xi_1}\ar@{<-}[rrd]&
{\color{red}\xi_2}\ar@{<-}[ld]\ar@{->}[rd]&
{\color{red}\xi_3}\ar@{->}[lld]\ar@{<-}[d]\\
x'''_1\ar@{<-}[rr]&&x''_2
}
&\stackrel{\mu_1}{\Longleftarrow}&
\xymatrix{
{\color{red}\xi_1}\ar@{<-}[rrd]&
{\color{red}\xi_2}\ar@{->}[ld]\ar@{->}[rd]&
{\color{red}\xi_3}\ar@{<-}[lld]\\
x''_1\ar@{->}[rr]&&x''_2
}
&
\stackrel{\mu_2}{\Longleftarrow}
&
\xymatrix{
{\color{red}\xi_1}\ar@{<-}[d]\ar@{->}[rrd]&
{\color{red}\xi_2}\ar@{->}[ld]\ar@{<-}[rd]&
{\color{red}\xi_3}\ar@{<-}[lld]\\
x''_1\ar@{<-}[rr]&&x'_2
}
\\[50pt]
\Downarrow\mu_2\\[10pt]
\xymatrix{
{\color{red}\xi_1}\ar@{<-}[d]\ar@{->}[rrd]&
{\color{red}\xi_2}\ar@{->}[ld]\ar@{<-}[rd]&
{\color{red}\xi_3}\ar@{->}[d]\\
x'''_1\ar@{->}[rr]&&x'''_2
}
&
\stackrel{\mu_1}{\Longrightarrow}
&
 \xymatrix{
{\color{red}\xi_1}\ar@{->}[d]&
{\color{red}\xi_2}\ar@{<-}[ld]\ar@{<-}[rd]&
{\color{red}\xi_3}\ar@<-3pt>@{->}[d]\\
x^{IV}_1\ar@{<-}[rr]&&x'''_2
}
&
\stackrel{\mu_2}{\Longrightarrow}
&\cdots
\end{array}
$$
One then has: 
$$
\begin{array}{ll}
\displaystyle
x_1'=\frac{1+x_2}{x_1}-\frac{\xi_1\xi_2}{x_1},
&
\displaystyle
x_2'=
\frac{1+x_1+x_2}{x_1x_2}
-\frac{\xi_1\xi_2}{x_1x_2}-\frac{\xi_2\xi_3}{x_2},
\\[10pt]
\displaystyle
x_1''=\frac{1+x_1}{x_2}
+\frac{x_1}{x_2}\left(\xi_1+\xi_3\right)\xi_2,
&
\displaystyle
x_2''=x_1\left(1+\xi_1\xi_2\right)\\[10pt]
\displaystyle
x_1'''=x_2\left(1+\xi_2\xi_3\right),&
\end{array}
$$
and so on.
The next mutations of the quiver
and of the coordinates can be easily calculated,
and once again, the Laurent phenomenon is verified.

\medskip

The presymplectic form in this example is as follows:
$$
\om=\frac{dx_1\wedge{}dx_2}{x_1x_2}
-\frac{d(\xi_1\xi_2)\wedge{}dx_1}{x_1}
+\frac{d\left(\xi_1\xi_2\right)\wedge{}dx_2}{x_2}
-\frac{d\left(\xi_2\xi_3\right)\wedge{}dx_2}{x_2}.
$$
Although the computation in this case is slightly more involved,
one readily checks its invariance under the above mutations.
\end{ex}

\begin{ex}
{\bf The supergroup $\OSp(1|2)$}.
One of the first examples of cluster algebras
given in~\cite{FZ1} is the algebra of regular functions on the Lie group $\SSL(2)$.
We consider here its superanalog.

The most elementary superanalog of the group $\SSL(2)$ is the supergroup $\OSp(1|2)$.
For more details about properties and applications of this supergroup, see~\cite{Man}.
Let $\Rc=\Rc_0\oplus\Rc_1$ be a commutative ring.
The set of $\Rc$-points of the supergroup $\OSp(1|2)$  is the following $3|2$-dimensional supergroup of matrices:
\begin{equation}
\label{OSpRel}
\left(
\begin{array}{cc|c}
a&b&\g\\[4pt]
c&d&\d\\[4pt]
\hline
\a&\b&e
\end{array}
\right)
\qquad
\hbox{such that}
\qquad
\begin{array}{rcl}
ad&=&1+bc-\a\b,\\[4pt]
e&=&1+\a\b,\\[4pt]
\g&=&a\b-b\a\\[4pt]
\d&=&c\b-d\a.
\end{array}
\end{equation}
The elements $a,b,c,d,e\in\Rc_0$, and $\a,\b,\g,\d\in\Rc_1$;
these elements are generators of the algebra of regular functions on $\OSp(1|2)$.

Choose the initial cluster coordinates $(a,b,c,\a,\b)$,
and consider the following quiver:
$$
 \xymatrix{
&{\color{red}\b}\ar@{->}[rd]&&
{\color{red}\a}\ar@{<-}[ld]\\
b\ar@{<-}[rr]&& a\ar@{->}[rr]&&c
}
$$
The coordinate $d$ is then the mutation of $a$, i.e., $a'=d$.
Indeed, the exchange relation~\eqref{Mute} for the coordinate $a$ reads
$$
aa'=1+bc+\b\a,
$$
which is precisely the first equation for $\OSp(1|2)$ relating $a$ and $d$.
Note that, similarly to the $\SL_2$-case 
the coordinates $b$ and $c$ are frozen cf.~\cite{FZ1}.
\end{ex}

A challenging problem is to find more geometric examples of of cluster superalgebras:
Lie supergroups, Grassmannians, etc.

\section{Superfriezes and cluster superalgebras of type $A$}\label{SFS}

Frieze patterns were invented by Coxeter~\cite{Cox}
(see also~\cite{CoCo}).
This notion provides surprising relations between classical continued fractions,
projective geometry (cross-ratios) and quiver representations.
Coxeter's friezes are also related to linear difference equations and
the classical moduli spaces $\mathcal{M}_{0,n}$ of configurations of points,
see~\cite{SVRS}.
The set of Coxeter's friezes is an algebraic variety
that has a structure of cluster algebras,
associated to the Dynkin quivers $A_n$.
For a modern survey, see~\cite{Mor}.

The notion of {\it superfrieze} was introduced in~\cite{SFriZ}
as generalization of Coxeter's frieze patterns.
The collection of all superfriezes is an algebraic supervariety
isomorphic to the supervariety of supersymmetric Hill's 
(or one-dimensional Schr\"odinger) equations~\cite{SVRS}
with some particular monodromy condition.

In this section, we describe the structure of cluster superalgebra
on the supervariety of superfriezes.
The underlying quiver will be $A_n$, the extended quiver~$\widetilde{\Qc}$
is obtained by adding $n+1$ odd coordinates.
We start with the description of the supergroup $\OSp(1|2)$
that is used to develop the theory of superfriezes.

\subsection{Supersymmetric discrete Schr\"odinger equation}
Consider two infinite sequences of elements of some supercommutative ring $\Rc$:
$$
(a_i),
\quad
(\b_i),
\quad
i\in\Z,
$$
where $a_i\in\Rc_0$ and $\b_i\in\Rc_1$.

The following equation with indeterminate $(V_i,W_i)_{i\in\Z}$:
\begin{equation}
\label{SeQE}
\left(
\begin{array}{l}
V_{i-1}\\[4pt]
V_i\\[4pt]
W_i
\end{array}
\right)=
A_i\left(
\begin{array}{l}
V_{i-2}\\[4pt]
V_{i-1}\\[4pt]
W_{i-1}
\end{array}
\right),
\qquad
\hbox{where}
\qquad
A_i=
\left(
\begin{array}{cc|c}
0&1&0\\[4pt]
-1&a_i&-\b_i\\[4pt]
\hline
0&\b_i&1
\end{array}
\right),
\end{equation}
is the supersymmetric version of discrete Schr\"odinger equation, see~\cite{SFriZ}.
Note that the matrix $A_i$ belongs to the supergroup $\OSp(1|2)$.

We assume that the coefficients $a_i,\b_i$ are 
(anti)periodic with some period $n$:
$$
a_{i+n}=a_i,
\qquad
\b_{i+n}=-b_i,
$$
for all $i\in\Z$.
Under this assumption, there is a notion of {\it monodromy},
i.e., an element $M\in\OSp(1|2)$, such that periodicity properties of
the solutions of~(\ref{SeQE}) are described by $M$.

Supersymmetric discrete Schr\"odinger equations
with fixed monodromy matrix:
\begin{equation}
\label{MoQE}
M=\left(
\begin{array}{rr|c}
-1&0&\;\;0\\[4pt]
0&-1&\;\;0\\[4pt]
\hline
0&0&\;\;1
\end{array}
\right).
\end{equation}
considered in~\cite{SFriZ}.
This is an algebraic supervariety of dimension $n|(n+1)$
which is a version of super moduli space $\mathfrak{M}_{0,n}$, see~\cite{Wit}.
The notion of superfrieze allows one to define special coordinates on this supervariety.

\subsection{The definition of a superfrieze and the corresponding superalgebra}\label{TheDef}

Similarly to the case of classical Coxeter's friezes, a superfrieze
is a horizontally-infinite array bounded by rows of $0$'s and $1$'s.
Even and odd elements alternate and form ``elementary diamonds'';
there are twice more odd elements.

\begin{defn}
A superfrieze, or a supersymmetric frieze pattern, is the following array
$$
\begin{array}{ccccccccccccccccccccccccc}
&\ldots&0&&&&0&&&&0\\[10pt]
\ldots&{\color{red}0}&&{\color{red}0}&&{\color{red}0}
&&{\color{red}0}&&{\color{red}0}&&\ldots\\[10pt]
\;\;\;1&&&&1&&&&1&&&\ldots\\[10pt]
&{\color{red}\varphi_{0,0}}&&{\color{red}\varphi_{\frac{1}{2},\frac{1}{2}}}&&{\color{red}\varphi_{1,1}}
&&{\color{red}\varphi_{\frac{3}{2},\frac{3}{2}}}&&{\color{red}\varphi_{2,2}}&&\ldots\\[12pt]
&&f_{0,0}&&&&f_{1,1}&&&&f_{2,2}\\[10pt]
&{\color{red}\varphi_{-\frac{1}{2},\frac{1}{2}}}&&{\color{red}\varphi_{0,1}}
&&{\color{red}\varphi_{\frac{1}{2},\frac{3}{2}}}
&&{\color{red}\varphi_{1,2}}&&{\color{red}\varphi_{\frac{3}{2},\frac{5}{2}}}&&\ldots\\[10pt]
f_{-1,0}&&&&f_{0,1}&&&&f_{1,2}&&\\[4pt]
&\iddots&&\iddots&& \ddots&&\ddots&& \ddots&&\!\!\!\ddots\\[4pt]
&&f_{2-m,1}&&&&f_{0,m-1}&&&&f_{1,m}&&&&\\[10pt]
\ldots&{\color{red}\varphi_{\frac{3}{2}-m,\frac{3}{2}}}&&{\color{red}\varphi_{2-m,2}}&&\ldots
&&{\color{red}\varphi_{0,m}}&&{\color{red}\varphi_{\frac{1}{2},m+\frac{1}{2}}}&&{\color{red}\varphi_{1,m+1}}\\[10pt]
\;\;\;1&&&&1&&&&1&&&&&\\[10pt]
\ldots&{\color{red}0}&&{\color{red}0}&&{\color{red}0}
&&{\color{red}0}&&{\color{red}0}&&{\color{red}0}&\\[10pt]
&\ldots&0&&&&0&&&&0&\ldots
\end{array}
$$
where $f_{i,j}\in\Rc_0$ and $\varphi_{i,j}\in\Rc_1$, and where every 
{\it elementary diamond}:
$$
\begin{array}{ccccc}
&&B&&\\[4pt]
&{\color{red}\Xi}&&{\color{red}\Psi}&\\[4pt]
A&&&&D\\[4pt]
&{\color{red}\Phi}&&{\color{red}\Sigma}&\\[4pt]
&&C&&
\end{array}
$$
satisfies the following conditions:
\begin{equation}
\label{Rule}
\begin{array}{rcl}
AD-BC&=&1+\Sigma\Xi,\\[4pt]
B\Phi-A\Psi&=&\Xi,\\[4pt]
B\Sigma-D\Xi&=&\Psi,
\end{array}
\end{equation}
that we call the {\it frieze rule}.

The integer $m$, i.e., the number of even rows between the rows
of $1$'s is called the {\it width} of the superfrieze.
\end{defn}

The last two equations of~(\ref{Rule}) are equivalent to
$$
A\Sigma-C\Xi=\Phi,
\qquad
D\Phi-C\Psi=\Sigma.
$$
Note also that these equations also imply $\Xi\Sigma=\Phi\Psi$,
so that the first equation can also be written as follows: 
$AD-BC=1-\Xi\Sigma$.

One can associate an elementary diamond with every element of $\OSp(1|2)$
using the following formula:
$$
\left(
\begin{array}{cc|c}
a&b&\g\\[4pt]
c&d&\d\\[4pt]
\hline
\a&\b&e
\end{array}
\right)
\qquad
\longleftrightarrow
\qquad
\begin{array}{ccccc}
&&\!\!\!-a&&\\[4pt]
&{\color{red}\g}&&{\color{red}\a}&\\[4pt]
b&&&&\!\!\!\!-c\\[4pt]
&\!\!\!{\color{red}-\b}&&{\color{red}\d}&\\[4pt]
&&d&&
\end{array}
$$
so that the relations~(\ref{OSpRel}) and~(\ref{Rule}) coincide.

Consider also the configuration:
$$
\begin{array}{ccccccc}
&&{\color{red}\widetilde{\Psi}}&&{\color{red}\widetilde{\Xi}}\\[4pt]
&&&B&&\\[4pt]
{\color{red}\widetilde{\Phi}}&&{\color{red}\Xi}&&{\color{red}\Psi}&&{\color{red}\widetilde{\Sigma}}\\[4pt]
&A&&&&D\\[4pt]
&&{\color{red}\Phi}&&{\color{red}\Sigma}&\\[4pt]
&&&C&&
\end{array}
$$
The frieze rule~\eqref{Rule} then implies
$$
B\,(\Phi-\widetilde{\Phi})=
A\,(\Psi-\widetilde{\Psi}),
\qquad
B\,(\Sigma-\widetilde{\Sigma})=
D\,(\Xi-\widetilde{\Xi}).
$$

\begin{defn}
The supercommutative superalgebra generated by
all the entries of a superfrieze will
be called the {\it algebra of a superfrieze}.
\end{defn}

\subsection{Examples: superfriezes of width $1$ and $2$}

The most general superfrieze of width $m=1$ is of the following form:
$$
\begin{array}{ccccccccccccccccccccccc}
&&0&&&&0&&&&\!\!0&&&&0\\[8pt]
&{\color{red}0}&&{\color{red}0}&&{\color{red}0}&&
\!\!\!{\color{red}0}&&{\color{red}0}&&{\color{red}0}&&\;\;\;{\color{red}0}&&\!\!\!{\color{red}0}\\[8pt]
1&&&&1&&&&1&&&&\!\!1&&&&1\\[8pt]
&{\color{red}\xi}&&{\color{red}\xi}&&{\color{red}\xi'}
&&{\color{red}\xi'}&&{\color{red}\xi-x\eta}&&{\color{red}\xi-x\eta}
&&{\color{red}\eta}&&{\color{red}\eta}\\[10pt]
&&x&&&&x'&&&&\!\!\!x&&&&x'\\[10pt]
&{\color{red}\xi-x\eta}&&\;\;\;{\color{red}x\eta-\xi}&&\;{\color{red}\eta}&&{\color{red}-\eta}
&&{\color{red}-\xi}&&{\color{red}\xi}&&{\color{red}-\xi'}&&{\color{red}\xi'}\\[8pt]
1&&&&1&&&&1&&&&\!\!1&&&&1\\[8pt]
&{\color{red}0}&&{\color{red}0}&&\;\;{\color{red}0}
&&\!{\color{red}0}&&{\color{red}0}&&{\color{red}0}&&\;\;\;{\color{red}0}&&\!\!\!{\color{red}0}\\[10pt]
&&0&&&&0&&&&\!\!0&&&&0\
\end{array}
$$
where
$$
x'=\frac{2}{x}+\frac{\eta\xi}{x},
\qquad 
\xi'=\eta-\frac{2\xi}{x}.
$$
One can chose local coordinates $(x,\xi,\eta)$
to parametrize the supervariety of superfriezes.
The value of $x'$ precisely corresponds to the mutation of~$x$
in Example~\ref{MirifSupMut1}.

The next example is a superanalog of
so-called Gauss' ``Pentagramma mirificum'':
$$
\begin{array}{cccccccccccccccccccccc}
0&&&&\!\!\!0&&&&\!\!0&&&&\!\!\!0&&&&0&&\ldots\\[10pt]
&{\color{red}0}&&\!\!{\color{red}0}&&{\color{red}0}
&&{\color{red}0}&&{\color{red}0}&&{\color{red}0}&&\!\!{\color{red}0}&&{\color{red}0}&&{\color{red}0}\\[10pt]
\ldots&&1&&&&1&&&&1&&&&\!\!1&&&&\!\!1\\[4pt]
&{\color{red}\xi^*}
&&\!\!{\color{red}\xi}&&{\color{red}\xi}&&{\color{red}\xi'}&&{\color{red}\xi'}
&&{\color{red}\raisebox{.5pt}{\textcircled{\raisebox{-.9pt} {1}}}}
&&{\color{red}\raisebox{.5pt}{\textcircled{\raisebox{-.9pt} {1}}}}
&&\!\!{\color{red}\zeta^*}&&{\color{red}\zeta^*}
\\[10pt]
y'&&&&
\!\!x&&&&\!\!x'
&&&&x''&&&&y\\[10pt]
&\!\!\!{\color{red}-\eta'}&&
\!\!\!{\color{red}\eta^*}
&&{\color{red}\raisebox{.5pt}{\textcircled{\raisebox{-.9pt} {2}}}}
&&{\color{red}\eta}
&&{\color{red}\raisebox{.5pt}{\textcircled{\raisebox{-.9pt} {2}}}'}
&&{\color{red}\eta'}
&&\!\!\!{\color{red}\eta^*}&&\!\!{\color{red}-\eta}
&&{\color{red}\eta}\\[10pt]
&&x''&&&&y
&&&&y'&&&&x
&&&&\!\!\!x'\\[10pt]
&{\color{red}\raisebox{.5pt}{\textcircled{\raisebox{-.9pt} {1}}}}
&&{\color{red}-\raisebox{.5pt}{\textcircled{\raisebox{-.9pt} {1}}}}
&&{\color{red}\zeta^*}
&&{\color{red}-\zeta^*}
&&{\color{red}\zeta}
&&\!\!{\color{red}-\zeta}&&\!\!\!{\color{red}\zeta'}
&&{\color{red}-\zeta'}&&{\color{red}-\xi'}\\[10pt]
1&&&&\!\!1&&&&1&&&&1&&&&1&&\ldots\\[10pt]
&{\color{red}0}&&\!\!{\color{red}0}&&{\color{red}0}&&{\color{red}0}&&{\color{red}0}
&&{\color{red}0}&&\!\!{\color{red}0}&&{\color{red}0}&&{\color{red}0}\\[10pt]
\ldots&&0&&&&0&&&&0&&&&0&&&&\!\!0
\end{array}
$$
The frieze is defined by the initial values
$(x,y,\xi,\eta,\zeta)$, the next values are easily calculated using the frieze rule:
$$
x'=\frac{1+y}{x}+\frac{\eta\xi}{x},
\qquad
y'=\frac{1+x+y}{xy}+\frac{\eta\xi}{xy}+\frac{\zeta\eta}{y}.
$$
Note that these formulas coincide with those of mutations of coordinates $x_1,x_2$
in Example~\ref{MirifSupMut4}.
One then calculates:
$$
x''=\frac{1+y'}{x'}+\frac{\eta'\xi'}{x'}
=\frac{1+x}{y}+\frac{\eta\xi}{y}+\xi\zeta+\frac{x}{y}\zeta\eta.
$$
All these Laurent polynomials
can be obtained as mutations of the
initial coordinates $(x,y,\xi,\eta,\zeta)$ and the initial quiver
$$
 \xymatrix{
{\color{red}\xi}\ar@{<-}[d]\ar@{->}[rrd]&
{\color{red}\eta}\ar@{->}[ld]\ar@{<-}[rd]&
{\color{red}\zeta}\ar@{->}[d]\\
x\ar@{->}[rr]&&y
}
$$
see Example~\ref{MirifSupMut4}.

For the odd coordinates, one has:
$$
\xi'=\eta-x'\xi=\eta-\frac{1+y}{x}\xi,
\qquad
\eta'=\zeta-y'\xi=\zeta-\frac{1+x+y}{xy}\xi-\frac{\xi\eta\zeta}{y},
\qquad
\zeta'=-\xi
$$
On the other side of the initial diagonal,
$$
\zeta^*=\eta-y\zeta,
\qquad
\eta^*=\xi-x\zeta,
\qquad
\xi^*=-\zeta.
$$
Furthermore,
$$
\raisebox{.5pt}{\textcircled{\raisebox{-.9pt} {1}}}=
\frac{(1+x)}{y}\eta-\xi-\zeta,
\qquad
\raisebox{.5pt}{\textcircled{\raisebox{-.9pt} {2}}}=
x\eta-y\xi,
$$
and finally:
$$
\raisebox{.5pt}{\textcircled{\raisebox{-.9pt} {2}}}'=
x'\zeta-y'\eta
=\frac{1+y}{x}\zeta-\frac{1+x+y}{xy}\eta-\frac{\xi\eta\zeta}{x}.
$$

\subsection{Properties of superfriezes}

The main properties of superfriezes are similar to those of the classical Coxeter friezes, see~\cite{SFriZ}.

(a)
The property of {\it glide symmetry} reads:
$$
f_{i,j}=f_{j-m-1,i-2},\qquad
\varphi_{i,j}=\varphi_{j-m-\frac{3}{2},i-\frac32},\qquad
\varphi_{i+\frac12,j+\frac12}=-\varphi_{j-m-1,i-1}.
$$
This implies, in particular, the following (anti)periodicity:
$$
\varphi_{i+n,j+n}=-\varphi_{i,j},
\qquad
f_{i+n,j+n}=f_{i,j},
$$ 
for all $i,j\in\Z$.

(b)
The Laurent phenomenon:
entries of a superfrieze are Laurent polynomials in 
the entries from any of its diagonals.

(c)
The collection of all superfriezes of width $n$ is an algebraic supervariety
of superdimension~$n|(n+1)$.
It is isomorphic to the supervariety of
Schr\"odinger equations~(\ref{SeQE}) with monodromy condition~(\ref{MoQE}).
The relation to difference equations is as follows.
The entries of the South-East diagonal of every superfrieze are solutions to
the discrete Schr\"odinger equation~(\ref{SeQE}).

\subsection{Superfriezes and cluster superalgebras}

Let us now describe the cluster structure of the supervariety of superfriezes.
Consider the following quiver with $m$ even and $m+1$ odd vertices:
\begin{equation}
\label{AQuiv}
 \xymatrix{
{\color{red}\xi_1}\ar@{<-}[rd]\ar@{->}[rrrd]&&
{\color{red}\xi_2}\ar@{->}[ld]\ar@{<-}[rd]\ar@{->}[rrrd]&&
{\color{red}\xi_3}\ar@<-3pt>@{->}[ld]\ar@{<-}[rd]
&\cdots&
{\color{red}\xi_{m}}&&
{\color{red}\xi_{m+1}}\ar@{->}[ld]\\
& x_1\ar@{->}[rr]&&
x_2\ar@{->}[rr]
&&x_3
&\cdots&\ar@{->}[lu]x_m
}
\end{equation}
and the corresponding cluster superalgebra.

\begin{thm}
\label{ClFrProp}
The algebra of a superfrieze of width $m$ 
is a subalgebra of the cluster superalgebra corresponding to the above quiver.
\end{thm}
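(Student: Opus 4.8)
The plan is to upgrade the classical Conway--Coxeter dictionary --- ``frieze pattern of width $m$'' versus ``cluster algebra of type $A_m$'' --- to the super setting. First I would fix, as the initial seed, one diagonal of the superfrieze: the $m$ even entries lying strictly between the two bounding rows of $1$'s, together with the $m+1$ odd entries of the two half-diagonals flanking it. Reading off the $2$-path structure of the quiver \eqref{AQuiv}, one finds $I_k=\{k-1,k+1\}$ and $J_k=\{k\}$ (with the natural convention $I_1=\{2\}$), so Condition (C) holds and the exchange relation \eqref{Mute} at $x_k$ reads $x_kx_k'=x_{k+1}+\bigl(1+(\xi_{k-1}+\xi_{k+1})\xi_k\bigr)x_{k-1}$ (empty products being $1$). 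A short computation identifies this, together with the odd frieze rules in \eqref{Rule}, with the local relations of the superfrieze along the diagonal; the case $m=2$ of Example~\ref{MirifSupMut4} and of the ``Pentagramma mirificum'' superfrieze in Section~\ref{SFS} are exactly this identification and serve as the template.

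Next I would show that moving from one diagonal of the superfrieze to an adjacent one is realised by a sweep of mutations $\mu_{i_1}\cdots\mu_{i_r}$ of the extended quiver. On the underlying quiver $\Qc=A_m$ this is the classical flip-of-triangulation statement; what must be added is that the \emph{extended} mutations of Definition~\ref{MutDef} transport the odd data correctly, i.e.\ that the $2$-paths created and cancelled by rules (1*)--(3*) produce precisely the odd entries assigned by the frieze rule to the new diagonal, and that every mutation of the sweep is \emph{allowed}. The latter is the delicate combinatorial point: one verifies that along the prescribed order the index sets keep the shape ``$J_\ell$ in the middle, $I_\ell$ at the two ends of an interval'', so that Condition~(C) is preserved throughout (the alternating sequences $\mu_1,\mu_2,\mu_1,\dots$ of Example~\ref{MirifSupMut4} and of the examples of Section~\ref{ElExSec} already exhibit this phenomenon). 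Granting this, every even entry of the superfrieze equals an even cluster variable obtained by an admissible sequence of mutations from the initial seed, hence lies in $A(\widetilde{\Qc})$; iterating over all diagonals, and using the (anti)periodicity of the superfrieze (corresponding to the monodromy condition~\eqref{MoQE}), takes care of all even entries $f_{i,j}$.

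For the odd entries I would argue by a separate, purely polynomial induction that needs neither mutations nor the Laurent phenomenon. By the two equivalent forms of the frieze rule recorded just after \eqref{Rule}, each odd entry of an elementary diamond is a \emph{polynomial} in the two even entries of that diamond and two odd entries of the neighbouring diamond, e.g.\ $\Phi=A\Sigma-C\Xi$ and $\Sigma=D\Phi-C\Psi$. Starting from the initial diagonal, whose flanking odd entries are the $\pm\xi_i$, and using that every even entry already lies in $A(\widetilde{\Qc})$ by the previous paragraph, this expresses every odd entry $\varphi_{i,j}$ of the superfrieze as a polynomial in even cluster variables and the fixed odd generators $\xi_1,\dots,\xi_{m+1}$; hence $\varphi_{i,j}\in A(\widetilde{\Qc})$. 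Combining the two parts, every generator of the algebra of the superfrieze lies in $A(\widetilde{\Qc})$, which is the assertion. (Conversely, feeding this back into Theorem~\ref{LeurThm} reproves the Laurent property of superfriezes.)

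The step I expect to be the main obstacle is the odd bookkeeping inside a mutation sweep: checking, micro-step by micro-step, that rules (1*)--(3*) reproduce the odd frieze entries \emph{with the correct signs} dictated by glide symmetry and antiperiodicity, that Condition~(C) never fails along the chosen order of mutations, and that all of this is consistent with the odd generators $\xi_i$ being frozen in $A(\widetilde{\Qc})$ while the odd \emph{entries} of the frieze genuinely change from one diagonal to the next. The even matching and the polynomial induction for the odd entries are then essentially the classical type-$A$ argument together with the elementary super-calculus of Section~\ref{ElExSec}.
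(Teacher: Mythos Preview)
Your plan is essentially the paper's proof, but with two points where you have made the argument harder than it needs to be.

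First, the exchange relation you write, $x_kx_k'=x_{k+1}+\bigl(1+(\xi_{k-1}+\xi_{k+1})\xi_k\bigr)x_{k-1}$, is the relation for the \emph{initial} quiver~\eqref{AQuiv}; it does not coincide with the superfrieze recurrence linking adjacent diagonals, which is $x_kx_k'=1+x_{k+1}x'_{k-1}+\xi_{k+1}\xi_k$. The paper resolves this exactly as you suggest in your second paragraph: one performs the sweep $\mu_1,\mu_2,\dots,\mu_m$, but the crucial step is to write down the \emph{intermediate} extended quiver after $\mu_1,\dots,\mu_{k-1}$ and read off the exchange relation for $x_k$ there. At that moment $x_k$ has only outgoing even arrows (to $x'_{k-1}$ and $x_{k+1}$) and a single odd $2$-path $\xi_{k+1}\to x_k\to\xi_k$, so~\eqref{Mute} literally reproduces the frieze recurrence. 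Drawing this intermediate quiver also disposes of your worry about Condition~(C) along the sweep: the shape is visibly preserved.

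Second, your paragraph on the ``main obstacle'' --- checking that rules (1*)--(3*) carry the odd entries of the frieze along the sweep --- is unnecessary, and in fact cannot succeed as stated: the odd variables $\xi_i$ are \emph{frozen} in $A(\widetilde{\Qc})$, so no mutation ever produces a new odd cluster variable. The paper does exactly what you propose in your third paragraph instead: once the even entries $x'_1,\dots,x'_m$ on the next diagonal are known to lie in $A(\widetilde{\Qc})$, the odd entries are obtained by polynomial induction. The paper uses the specific recurrence $\xi'_k-\xi'_{k-1}=-\xi_1x'_k$ (quoted from~\cite{SFriZ}); your alternative, reading off $\Phi=A\Sigma-C\Xi$ etc.\ directly from~\eqref{Rule}, works equally well and is arguably more self-contained. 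So drop the sweep-for-odd-entries plan entirely and keep only the polynomial induction; then your proof is the paper's proof.
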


\begin{proof}
Choose the following entries of the superfrieze on parallel diagonals:
$$
 \begin{array}{cccccccccccccc}
1&&&&1\\[4pt]
&{\color{red}*}&&{\color{red}\xi_1}&&{\color{red}*}&&{\color{red}\xi'_1}\\[4pt]
&&x_1&&&&x'_1\\[4pt]
&&&{\color{red}*}&&{\color{red}\xi_2}&&{\color{red}*}&&{\color{red}\xi'_2}\\[4pt]
&&&&x_2&&{\color{red}\ddots}&&x'_2&&\!\!{\color{red}\ddots}\\[4pt]
&&&&&\ddots&&{\color{red}\xi_m}&&\ddots&&{\color{red}\xi'_m}\\[4pt]
&&&&&&x_m&&&&x'_m\\[4pt]
&&&&&&&{\color{red}*}&&{\color{red}\xi_{m+1}}&&{\color{red}*}&&{\color{red}\xi'_{m+1}}\\[4pt]
&&&&&&&&1&&&&1
\end{array}
$$
The entries $\{x_1,\ldots,x_m,\xi_1,\ldots,\xi_{m+1}\}$ determine all other entries of the superfrieze, 
and can be taken for initial coordinates.
Our goal is to calculate the entries $\{x'_1,\ldots,x'_m,\xi'_1,\ldots,\xi'_{m+1}\}$
and show that these entries 
also belong to the cluster superalgebra $A(\widetilde\Qc)$ of the quiver~\eqref{AQuiv}.

Using the frieze rule~\eqref{Rule}, one obtains the following recurrent formula:
\begin{equation}
\label{LFor}
x_kx'_k=1+x_{k+1}x'_{k-1}+\xi_{k+1}\xi_k.
\end{equation}
On the other hand, let us perform consecutive mutations at vertices 
$x_1$, and then at $x_2,x_3\ldots,x_m$
of the quiver~\eqref{AQuiv}\footnote{
Note that, according to Lemma~\ref{AMProp} below, this is the only allowed sequence of even mutations.}.
After the $(k-1)$st step, one obtains the following quiver:
$$
 \xymatrix
  @!0 @R=1.3cm @C=1.3cm
  {
{\color{red}\xi_1}\ar@{->}[rd]&&
{\color{red}\xi_2}\ar@{<-}[ld]\ar@{->}[rd]&&
{\color{red}\xi_3}\ar@<-1pt>@{<-}[ld]\ar@{->}[llld]
&\cdots&
{\color{red}\xi_{k}}\ar@{<-}[ld]\ar@{->}[rrrd]&&
{\color{red}\xi_{k+1}}\ar@{->}[ld]\ar@{<-}[rd]&\cdots\\
& x'_1\ar@{->}[rr]&&
x'_2
&\cdots&x'_{k-1}&&
\ar@{->}[lu]x_k\ar@{->}[rr]\ar@{->}[ll]&&x_{k+1}&\cdots
}
$$
Therefore, the mutation at $x_k$ is allowed,
and the exchange relation for $x_k$ is exactly the same as the
recurrent formula \eqref{LFor} for $x'_k$.
We have proved that the values of the entries $\{x'_1,\ldots,x'_m\}$ in the frieze coincide
with the coordinates $\{x'_1,\ldots,x'_m\}$ of the quiver~\eqref{AQuiv} after 
the iteration of even mutations.

Note that after $m$ consecutive mutations at even vertices, the quiver~\eqref{AQuiv}
becomes as follows:
$$
 \xymatrix
   @!0 @R=1.3cm @C=1.3cm
 {
{\color{red}\xi_1}\ar@{->}[rd]&&
{\color{red}\xi_2}\ar@{<-}[ld]\ar@{->}[rd]&&
{\color{red}\xi_3}\ar@<-1pt>@{<-}[ld]\ar@{->}[rd]\ar@{->}[llld]
&&{\color{red}\xi_3}\ar@<-1pt>@{<-}[ld]\ar@{->}[llld]&\cdots&
{\color{red}\xi_{m}}\ar@{<-}[ld]&&
{\color{red}\xi_{m+1}}\ar@{<-}[ld]\ar@{->}[llld]\\
& x'_1\ar@{->}[rr]&&
x'_2\ar@{->}[rr]
&&x'_3
&\cdots&x'_{m-1}\ar@{->}[rr]&&\ar@{<-}[lu]x'_m
}
$$

Consider now for the odd entries of the superfrieze $\{\xi'_1,\ldots,\xi'_{m+1}\}$,
and let us proceed by induction.

For the first of the odd entries, one has:
$$
\xi'_1=\xi_2-x'_1\xi_1.
$$
Indeed, the frieze rule implies that the entry between 
$\xi_1$ and $\xi'_1$ (previously denoted by $*$) is also equal to $\xi'_1$,
i.e., we have the following fragment of the superfrieze:
$$
\begin{array}{cccccccccc}
&&&&1&&&&1\\[4pt]
&&&{\color{red}\xi_1}&&{\color{red}\xi'_1}&&{\color{red}\xi'_1}\\[4pt]
&&x_1&&&&x'_1\\[4pt]
&&&{\color{red}*}&&{\color{red}\xi_2}
\end{array}
$$
The above expression for $\xi'_1$ is just the third equality in~\eqref{Rule}.
It follows that $\xi'_1$ belongs to the cluster superalgebra $A(\widetilde\Qc)$.

It was proved in~\cite{SFriZ} that the entries on the diagonals
of the superfrieze satisfy recurrence equations with coefficients
standing in the first two rows.
In particular, Lemma 2.5.3 of~\cite{SFriZ} implies the
following recurrence for the odd entries of the superfrieze:
$$
\xi'_k-\xi'_{k-1}=
-\xi_1x'_k,
\qquad
\hbox{for all}
\quad
k.
$$
One concludes, by induction on $k$,
that all of the entries $\{\xi'_1,\ldots,\xi'_{m+1}\}$ 
belong to the cluster superalgebra $A(\widetilde\Qc)$.
Again, using the induction one arrives at the same conclusion for all
parallel diagonals.

Finally, one proves in a similar way
that the entries in-between, denoted by $*$,
also belong to the cluster superalgebra $A(\widetilde\Qc)$.
\end{proof}

\section{The Laurent phenomenon and
invariance of the presymplectic form}\label{PrSect}

In this section, we prove Theorems~\ref{LeurThm} and~\ref{SymThm}.

\subsection{Proof of the Laurent phenomenon}\label{PrLPSect}

Our proof goes along the same lines as the proof
of Theorem~3.2 of~\cite{FZ1} (see also Theorem~2.1 of~\cite{FZ2}).
We will repeat this proof giving the additional arguments when necessary.

\begin{lem}
\label{AMProp}
Given an extended quiver $\widetilde{\Qc}$ such that all the $2$-paths 
in $\widetilde{\Qc}_1$ are without multiplicities,
the mutation at a given vertex $x_k$ is allowed if and only if 
for every vertex $x_\ell\in\Qc$ connected to $x_k$ by an outgoing arrow $x_k\to{}x_\ell$,
(at least) one of the following conditions is satisfied:

\begin{enumerate}

\item[(a)]
$I_k=I_\ell$;

\item[(b)]
$J_k=J_\ell$;

\item[(c)]
$I_k=J_k=\emptyset$.

\item[(d)]
$I_k=J_\ell$, and $J_k=I_\ell$;

\item[(e)]
$I_\ell=J_\ell=\emptyset$.
\end{enumerate}

\end{lem}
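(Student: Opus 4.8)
The plan is to characterize, combinatorially, when the mutation $\mu_k$ preserves Condition (C), by unwinding the transformation law of the sets $(I_\ell,J_\ell)$ recorded in Section~\ref{IntroQuiS}. Recall that $\mu_k$ affects the $2$-path data only at vertices $x_\ell$ with $x_k\to x_\ell$ (through rule (1*), which enlarges $I_\ell\rightsquigarrow I_\ell\cup I_k$ and $J_\ell\rightsquigarrow J_\ell\cup J_k$) and at $x_k$ itself (through rules (2*) and (3*), which merely reverse and cancel $2$-paths through $x_k$, leaving $\{I_k,J_k\}$ fixed as an unordered pair and never destroying Condition~(C) locally). So the only place an obstruction can arise is at such an $x_\ell$: I would fix one outgoing arrow $x_k\to x_\ell$ and analyze the new $2$-path set through $x_\ell$.

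First I would set up the bookkeeping. Before the mutation, the $2$-paths through $x_\ell$ are exactly $\{(\xi_i\to x_\ell\to\xi_j): i\in I_\ell,\ j\in J_\ell\}$, each without multiplicity by hypothesis, so by Condition~(C) for $\widetilde{\Qc}$ this is the full ``bipartite'' set $I_\ell\times J_\ell$. Rule (1*) adds the $2$-paths coming from $x_k$, namely $I_k\times J_k$, but with the subtlety that $2$-paths through $x_\ell$ with opposite orientation cancel two-by-two (rule (3*)): since every original $2$-path $(\xi_i\to x_\ell\to\xi_j)$ is oriented $I_\ell\to J_\ell$ and every added one is oriented $I_k\to J_k$, a cancellation occurs precisely on the overlap where a pair $(\xi_a,\xi_b)$ appears once in each orientation, i.e. on $(I_\ell\times J_\ell)\cap(J_k\times I_k)$ paired against $(I_k\times J_k)\cap(I_\ell\times J_\ell)$. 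After cancellation, the resulting $2$-path set through $x_\ell$ is the symmetric-difference-type set $(I_\ell\times J_\ell)\,\triangle\,(I_k\times J_k)$ with appropriate orientations, and Condition~(C) at $x_\ell$ in $\mu_k(\widetilde{\Qc})$ holds if and only if this set is again of the pure bipartite form $I'_\ell\times J'_\ell$ (for some disjoint $I'_\ell,J'_\ell$) with a single orientation.

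Then the heart of the argument is the elementary set-theoretic lemma: for subsets $I_k,J_k$ (disjoint) and $I_\ell,J_\ell$ (disjoint), the ``product'' $I_\ell\times J_\ell$ unioned with $I_k\times J_k$ and then reduced by opposite-orientation cancellation is again a single rectangle $I'\times J'$ if and only if one of (a)--(e) holds. I would prove the ``if'' direction by direct verification case by case: e.g. if $I_k=I_\ell$ the union is $I_\ell\times(J_\ell\cup J_k)$ with no cancellations (since disjointness of $I_k$ from $J_k$ and of $I_\ell$ from $J_\ell$ forces the overlap to be empty), giving a rectangle; if $I_k=\emptyset=J_k$ or $I_\ell=\emptyset=J_\ell$ nothing is added or the new data is just $I_k\times J_k$; and if $I_k=J_\ell,\ J_k=I_\ell$ then $I_k\times J_k$ is exactly the orientation-reverse of $I_\ell\times J_\ell$, so everything cancels and the $2$-path set through $x_\ell$ becomes empty, which trivially satisfies Condition~(C). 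For the ``only if'' direction I would argue contrapositively: if none of (a)--(e) holds at some $x_\ell$, produce explicit indices witnessing a violation — pick $a\in I_\ell\setminus J_k$, $c\in I_k\setminus J_\ell$ with $a\neq c$ (possible by the failure of (a),(c),(e),(d)), and $b$ in the relevant $J$; then after the mutation $(\xi_a,\xi_b)$ is connected through $x_\ell$ but $(\xi_c,\xi_b)$ is not, or a two-path survives with one orientation while its ``partner'' does not, exactly as in Example (a) of Section~\ref{GraphMut} with $I_2=\{1,3\},\ J_2=\{2,4\}$. Bookkeeping several of these subcases cleanly is the step I expect to be the main obstacle — the combinatorics of which overlaps cancel and which survive is easy to get slightly wrong — so I would organize it as a single case split on which of the five alternatives fails and exhibit the bad pair of odd vertices in each.

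Finally I would note that the hypothesis that all $2$-paths are without multiplicity is used exactly to guarantee that the ``before'' configuration through each $x_\ell$ is a clean rectangle $I_\ell\times J_\ell$ rather than a weighted one, and that after rules (1*)--(3*) the only arithmetic on multiplicities is the $\pm1$ cancellation, so the post-mutation configuration is again $0$--$1$-valued; this keeps us within the setting of Definition~\ref{AllGraphDef} and makes ``Condition (C)'' equivalent to ``the $2$-path set through each vertex is a rectangle,'' which is precisely what (a)--(e) encode.
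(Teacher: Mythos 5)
Your overall strategy is the same as the paper's: localize the problem at the vertices $x_\ell$ with $x_k\to x_\ell$ (rules (2*)--(3*) at $x_k$ itself being harmless), verify sufficiency of (a)--(e) case by case, and prove necessity contrapositively by exhibiting a pair of odd vertices that violates Condition (C) after the mutation. The paper's own proof is terser: for (a)--(c) it simply observes that one of $I_\ell,J_\ell$ is unchanged under $I_\ell\mapsto I_\ell\cup I_k$, $J_\ell\mapsto J_\ell\cup J_k$, for (d) that all $2$-paths through $x_\ell$ cancel, and for the converse it picks $i\in I_k\setminus I_\ell$, $j\in J_\ell\setminus J_k$ and notes that $\xi_i$ and $\xi_j$ are then not joined through $x_\ell$; your ``rectangle'' reformulation and explicit case split on which alternative fails is a more detailed version of the same argument.

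There is, however, one concrete slip in your bookkeeping, and it sits exactly where you predicted trouble. The added $2$-paths run $\xi_i\to x_\ell\to\xi_j$ with $(i,j)\in I_k\times J_k$; such a path cancels against an existing one only if the \emph{reversed} path $\xi_j\to x_\ell\to\xi_i$ was already present, i.e.\ on $(I_k\times J_k)\cap(J_\ell\times I_\ell)$ -- not on $(I_k\times J_k)\cap(I_\ell\times J_\ell)$ as written. The latter set is the \emph{same-orientation} overlap: there nothing cancels, the $2$-path $\xi_i\to x_\ell\to\xi_j$ simply acquires multiplicity $2$. Consequently your claim that the post-mutation configuration is automatically $0$--$1$-valued is not justified, and since Condition (C) asks for ``exactly the same number'' of $2$-paths for every pair, the equivalence ``Condition (C) $\Leftrightarrow$ the $2$-path set is a rectangle'' is only valid once multiplicities are tracked: in each of the cases (a)--(e) you must either show the same-orientation overlap is empty or that the resulting multiplicity is uniform over the new rectangle $\left(I_\ell\cup I_k\right)\times\left(J_\ell\cup J_k\right)$ (e.g.\ in case (d) the overlap is empty because $I_k\cap I_\ell=J_\ell\cap I_\ell=\emptyset$). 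To be fair, the paper's proof also works purely at the level of the sets $I_\ell,J_\ell$ and passes over this point silently; but since your write-up makes the multiplicity claim explicit, this is the step you need to repair (or else add the standing assumption that coinciding same-orientation $2$-paths do not occur, in the spirit of the lemma's ``without multiplicities'' hypothesis).
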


\begin{proof}
It is obvious that any of the conditions (a)-(e) is sufficient.
Indeed, the mutation at $x_k$ sends all $2$-paths $(\xi_i\to{}x_k\to\xi_j)$ 
to every vertex $x_\ell\in\Qc$ connected to $x_k$ by an outgoing arrow $(x_k\to{}x_\ell)$,
creating $2$-paths $(\xi_i\to{}x_\ell\to\xi_j)$.
In cases (a)-(c), at least one of the sets $I_\ell$ or $J_\ell$
remains unchanged;
in case~(d), all arrows connecting $x_\ell$ to odd vertices disappear;
in case~(e), this is also clear.

Conversely, assume that $I_k\not=I_\ell$ and $J_k\not=J_\ell$, and the sets are non-empty.
Choose $i\in{}I_k$ and $i\not\in{}I_\ell$, and $j\not\in{}J_k$ and $j\in{}J_\ell$, 
then $\xi_i$ is not connected to $\xi_j$ by a $2$-path $(\xi_i\to{}x_\ell\to\xi_j)$
this is a contradiction.
\end{proof}

\begin{ex}
\label{AllExMuT}
The following example illustrates the case (b) of the above lemma.
$$
 \xymatrix
 @!0 @R=1.3cm @C=1.1cm
 {
{\color{red}\xi_{i_1}}\ar@{->}[rrrd]&
\cdots&{\color{red}\xi_{i_r}}\ar@{->}[rd]&
&\ar@{<-}[ld] {\color{red}\xi_{j_1}}\ar@{<-}[rrrd]&
\cdots&\ar@{<-}[llld] {\color{red}\xi_{j_s}\ar@{<-}[rd]}&
&\ar@{->}[ld] {\color{red}\xi_{h_1}}&
\cdots&{\color{red}\xi_{h_t}\ar@{->}[llld]}
\\
&&& x_k&&&&x_\ell\ar@{<-}[llll]\\
}
$$
The mutation of this quiver at $x_k$ is allowed.
\end{ex}

Given an initial seed $(\widetilde{\Qc}^0,\,\{x^0,\xi^0\})$,
and consider a sequence of $\ell$ consecutive mutations
$$
\cdots\circ\mu_k\circ\mu_j\circ\mu_i:
(\widetilde{\Qc}^0,\,\{x^0,\xi^0\})\mapsto
(\widetilde{\Qc}^1,\,\{x^1,\xi^1\})\mapsto
\cdots\mapsto(\widetilde{\Qc}^\ell,\,\{x^\ell,\xi^\ell\}).
$$
We need to prove that the coordinates
$\{x^\ell,\xi^\ell\}$ are Laurent polynomials in~$\{x^0,\xi^0\}$.
For this end, we will use the induction on $\ell\geq3$ .

Following~\cite{FZ1,FZ2},
let us consider the following three seeds:
$$
(\widetilde{\Qc}^1,\,\{x^1,\xi^1\})=
\mu_i(\widetilde{\Qc}^0,\,\{x^0,\xi^0\}),
\qquad
(\widetilde{\Qc}^2,\,\{x^2,\xi^2\})=
\mu_j\circ\mu_i(\widetilde{\Qc}^0,\,\{x^0,\xi^0\}),
$$
and
$$
(\widetilde{\Qc}^3,\,\{x^3,\xi^3\}):=\mu_i\circ\mu_j\circ\mu_i
(\widetilde{\Qc}^0,\,\{x^0,\xi^0\}).
$$
Using the inductive assumption, we suppose that
$\{x^\ell,\xi^\ell\}$ are Laurent polynomials in~$\{x^1,\xi^1\}$ and~$\{x^2,\xi^2\}$,
since these seeds are closer to $\{x^\ell,\xi^\ell\}$ in the sequence of mutations.
Furthermore, $\{x^\ell,\xi^\ell\}$ must also be Laurent polynomials in~$\{x^3,\xi^3\}$,
since applying $\mu_i$ to $(\widetilde{\Qc}^3,\,\{x^3,\xi^3\})$, one obtains the seed
$\mu^2_i\circ\mu_j\circ\mu_i(\widetilde{\Qc}^0,\,\{x^0,\xi^0\})$
that generates the same cluster algebra as $(\widetilde{\Qc}^2,\,\{x^2,\xi^2\})$
(cf. Remark~\ref{SqMutLem}).

We now need the following analog of Lemma~3.3 of~\cite{FZ1}
(see also Lemma~2.2 of~\cite{FZ2}).

\begin{lem}
\label{CatPiL}
(i)
The coordinates~$\{x^1,\xi^1\}$, $\{x^2,\xi^2\}$ and~$\{x^3,\xi^3\}$
are Laurent polynomials in $\{x^0,\xi^0\}$.

(ii)
Furthermore, $\mathrm{gcd}(x^3_i,\,x^1_i)=\mathrm{gcd}(x^2_j,\,x^1_i)=1$,
where the greatest common divisor is taken over the ring of polynomials
in $\{x_1,\ldots,x_n\}$.
\end{lem}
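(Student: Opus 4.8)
The plan is to follow the strategy of Fomin--Zelevinsky's proof of the classical Laurent phenomenon (\cite{FZ1}, Lemma~3.3, and \cite{FZ2}, Lemma~2.2), adapting each step to keep track of the odd variables. Recall that the ring in question is $\C[x_1^{\pm1},\ldots,x_n^{\pm1},\xi_1,\ldots,\xi_m]$, and that by a ``Laurent polynomial'' we mean an element of $\C[x_1^{\pm1},\ldots,x_n^{\pm1},\xi_1,\ldots,\xi_m]$, i.e.\ the $\xi$'s never appear in denominators (this is automatic since the exchange relation~\eqref{Mute} only ever divides by $x_k$). Part~(i): the seed $\{x^1,\xi^1\}$ differs from $\{x^0,\xi^0\}$ only in $x^1_i=x^0_i{}'$, which is manifestly a Laurent polynomial by~\eqref{Mute}. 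For $\{x^2,\xi^2\}$, the only new coordinate is $x^2_j$, obtained by one further mutation $\mu_j$ applied to the quiver $\widetilde{\Qc}^1$; again~\eqref{Mute} expresses $x^2_j$ as (a polynomial in the variables of $\widetilde{\Qc}^1$) divided by $x^1_j=x^0_j$, hence a Laurent polynomial in $\{x^0,\xi^0\}$ after substituting the Laurent-polynomial expression for $x^1_i$. The case $\{x^3,\xi^3\}$ is the one that requires care and is treated below.

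The heart of the matter is showing that $x^3_i$ is a Laurent polynomial and verifying the coprimality statements in~(ii). Here $x^3_i$ is obtained by mutating at $x_i$, then $x_j$, then $x_i$ again. First one checks whether $i$ and $j$ are connected by an arrow in $\widetilde{\Qc}^0$. If they are not, then $\mu_i$ and $\mu_j$ essentially commute and $x^3_i = x^1_i$ (possibly up to the invertible factor coming from Remark~\ref{SqMutLem}, which does not affect Laurentness or coprimality), so there is nothing to prove. If $i$ and $j$ are connected, one writes out~\eqref{Mute} for the third mutation: $x_i x^3_i$ equals a product of even variables of $\widetilde{\Qc}^2$ plus $\big(1+\sum \xi_p\xi_q\big)$ times another such product, where the $2$-paths in the sum are those through $x_i$ in $\widetilde{\Qc}^2$. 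Substituting the Laurent-polynomial expressions for $x^1_i$ and $x^2_j$ and clearing $x^1_i$ from the denominator, the classical argument of \cite{FZ1} shows that the resulting expression is divisible by $x^1_i$ in the polynomial ring $\C[x_1,\ldots,x_n]$ (this is exactly where the classical rules (1)--(3) of the underlying quiver mutation, combined with Condition~(C) governing the $2$-paths, are used), so $x^3_i$ is again a Laurent polynomial. Since the only denominators that ever appeared are powers of $x^0_i$ and $x^0_j$, and the numerators of $x^1_i$ are polynomials with nonzero constant term in the relevant variables (the classical argument gives $\mathrm{gcd}(x^2_j,x^1_i)=\mathrm{gcd}(x^3_i,x^1_i)=1$ over $\C[x_1,\ldots,x_n]$), part~(ii) follows verbatim from the even case, because the extra $\xi$-summands in~\eqref{Mute} only \emph{add} nilpotent terms and never alter the constant term of a numerator.

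I expect the main obstacle to be the bookkeeping in the third mutation when $i$ and $j$ are adjacent: one must track not only how the arrows of the underlying quiver transform (handled by the classical proof) but also how the sets $I_\ell,J_\ell$ and the $2$-paths through each $x_\ell$ transform under the three mutations, using rules (1*)--(3*) and the transformation $I_\ell \Rightarrow I_\ell\cup I_k$, $J_\ell\Rightarrow J_\ell\cup J_k$. The key point to verify is that the $\xi$-dependent factor $\big(1+\sum\xi_p\xi_q\big)$ appearing in the exchange relation at the third step, once one substitutes $x_j = \tfrac{1}{x^2_j}(\text{numerator})$, recombines so that $x^1_i$ still divides the numerator of $x^3_i$ exactly as in the classical computation; intuitively this works because the $\xi_p\xi_q$ are nilpotent and commute with everything, so a divisibility valid modulo the nilpotent ideal, together with the $\xi\equiv0$ classical divisibility, forces divisibility on the nose. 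Once Lemma~\ref{CatPiL} is established, the proof of Theorem~\ref{LeurThm} is completed by the standard ``two Laurent expansions with coprime denominators'' argument: $\{x^\ell,\xi^\ell\}$ is a Laurent polynomial in each of $\{x^1,\xi^1\}$, $\{x^2,\xi^2\}$, $\{x^3,\xi^3\}$ by induction, hence (substituting (i)) a Laurent polynomial in $\{x^0,\xi^0\}$ with denominators that are simultaneously powers of $x^1_i$ and of coprime polynomials, forcing the denominator to be a monomial in $\{x^0_1,\ldots,x^0_n\}$.
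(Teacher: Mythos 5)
There is a genuine gap at the central step, the Laurentness of $x^3_i$ when $x_i$ and $x_j$ are adjacent. Your mechanism for it --- that divisibility of the purely even part (the classical Fomin--Zelevinsky argument) together with the nilpotency of the odd corrections ``forces divisibility on the nose'' --- is false as a general principle: for example $x^2+\xi_1\xi_2\,y$ has even part divisible by $x$ but is not itself divisible by $x$. And this is not a hypothetical worry: the paper's proof consists precisely of carrying out the ``quite long'' computation of $\mu_i\circ\mu_j\circ\mu_i$ on a general fragment with odd vertices $\xi,\eta$ attached to $x_i$ and $\zeta,\e$ attached to $x_j$, and the result is that $x^3_i$ contains, besides Laurent terms, the genuinely non-Laurent term
$$
-\frac{(\textstyle\sum\xi)(\sum\eta)(\sum\zeta)(\sum\e)\,M^i_1M^j_1\,x_i}{x_j\,(M^i_1+x_jM^i_2)},
$$
whose even projection is zero but whose denominator involves the numerator of $x^1_i$. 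So the nilpotent corrections do obstruct the Laurent property in general, and the lemma is simply false for arbitrary extended quivers.

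What saves the statement is the hypothesis that the mutations are \emph{allowed}: by Lemma~\ref{AMProp} (Condition (C)), one of the conditions (a)--(e) holds, and each of them forces the product $(\sum\xi)(\sum\eta)(\sum\zeta)(\sum\e)$ to vanish (either one of the sums is empty, or two of the index sets coincide and the corresponding product is a square of a sum of odd variables, hence zero). In your write-up the allowedness of the mutation appears only as a side remark (``Condition (C) governing the $2$-paths is used''), while the actual argument you give would go through without it --- which is exactly the sign that the argument is not correct. The proof needs the explicit computation (or an equivalent structural argument) isolating the obstruction term and then invoking allowedness to kill it; the even-case divisibility argument applies only to the even projection and must be supplemented, not merely quoted. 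Your non-adjacent case (via Remark~\ref{SqMutLem}) and your part (ii) (coprimality reduces to the even parts because the corrections are nilpotent) do agree with the paper, and your concluding ``coprime denominators'' induction is the standard one; it is only the adjacent case of (i) that is unproved as written.
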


\begin{proof}
Part (i).
The coordinates~$\{x^1,\xi^1\}$, $\{x^2,\xi^2\}$ are obviously Laurent polynomials in $\{x^0,\xi^0\}$.
In the case where the vertices $x_i$ and $x_j$ are not connected by an arrow,
the mutations $\mu_i$ and $\mu_j$ commute, and it is again obvious that
the coordinates $\{x^3,\xi^3\}$
are Laurent polynomials in $\{x^0,\xi^0\}$.
Let us calculate the mutation $\mu_i\circ\mu_j\circ\mu_i$ in the case where
the vertices $x_i$ and $x_j$ are connected by an arrow.
We assume that the arrow is oriented as follows $x_i\to{}x_j$
(the computations in case $x_i\leftarrow{}x_j$ are similar).

Let us start with an arbitrary extended quiver a fragment of which is as follows:
$$
\xymatrix{
{\color{red}\xi_1\cdots\xi_r}\ar@{->}[rd]&
{\color{red}\eta_1\cdots\eta_s}\ar@{<-}[d]&
{\color{red}\zeta_1\cdots\zeta_t}\ar@{->}[d]&
{\color{red}\e_1\cdots\e_u}\ar@{<-}[ld]\\
&x_i\ar@{->}[r]& x_j&
}
$$
and show that the Laurent phenomenon occurs if and only if the mutation is allowed.
One obtains from~\eqref{Mute}:
$$
x^1_i=\frac{M^i_1+x_jM^i_2}{x_i}+
\frac{(\sum\xi)(\sum\eta)M^i_1}{x_i},
$$
where $M^i_1$ and $x_jM^i_2$ are the monomials obtained by the product of
the even coordinates connected to $x_i$ by ingoing and outgoing arrows, respectively.
Similarly,
$$
x^2_j=\frac{M^i_1M^j_1+x^1_iM^j_2}{x_j}+
\frac{(\sum\zeta)(\sum\e)M^j_1}{x_j}.
$$
The coordinates $x^1_i$ and $x^2_j$ are, of course, trivially Laurent
polynomials in the initial coordinates.
Next, after a (quite long) computation, one obtains:
$$
\begin{array}{rcl}
x^3_i&=&
\displaystyle
\frac{M^j_2+x_iM^j_1}{x_j}+
\frac{(\sum\zeta)(\sum\e)M^j_1x_i}{x_j}\\[12pt]
&&
\displaystyle
-\frac{(\sum\xi)(\sum\eta)(\sum\zeta)(\sum\e)M^i_1M^j_1x_i}{x_j(M^i_1+x_jM^i_2)}.
\end{array}
$$
(Note that the coefficient of the term $(\sum\xi)(\sum\eta)$ vanishes identically.)
We conclude that $x^3_i$ is a Laurent polynomial if and only if
the product $(\sum\xi)(\sum\eta)(\sum\zeta)(\sum\e)$ vanishes.
This is guaranteed by (and almost equivalent to) the
assumption that the mutation $\mu_1$ is allowed.

Indeed, the following statement provides with a necessary and sufficient condition
for the mutation at a vertex $x_k$ to be allowed.

The second part of the lemma stating that $x^1_i$ and $x^3_i$,
and $x^1_i$ and $x^2_j$ are coprime
follows from the similar result in the poorly even case.
Indeed, our expressions for $x^1_i,x^3_i$ and $x^2_j$ differ from their purely even 
projections by nilpotent terms.
Therefore, these expressions are coprime
if and only if their purely even parts are coprime.

Hence the Lemma.
\end{proof}

\medskip

The rest of the proof is identical to those
of Theorem~3.2 from~\cite{FZ1} (and of Theorem~2.1 from~\cite{FZ2}).
By inductive assumption,
all of the coordinates $\{x^\ell_1,\ldots,x^\ell_n,\xi^\ell_1,\ldots,\xi^\ell_m\}$
are Laurent polynomials in 
$\{x^0_1,\ldots,x^1_i,\ldots,x^0_n,\xi_1,\ldots,\xi_m\}$
and also in
$\{x^0_1,\ldots,x^3_i,\ldots,x^2_j,\ldots,x^0_n,\xi_1,\ldots,\xi_m\}$.
But the fact that the mutated coordinates are
themselves Laurent polynomials and coprime
implies that the coordinates $\{x^\ell_1,\ldots,x^\ell_n,\xi^\ell_1,\ldots,\xi^\ell_m\}$
are, indeed, Laurent polynomials in the initial coordinates.

Theorem~\ref{LeurThm} is proved.

\subsection{The presymplectic form is invariant}\label{ISFSect}
We prove Theorem \ref{SymThm} stating that,
for every extended quiver~$\widetilde{\Qc}$, the form $\om$
is invariant under mutations at even coordinates.
Our proof goes along the same lines
as the elementary proof in the classical case, 
presented in Section~\ref{GSVFSec}.

Let us choose an even vertex $x_k\in\widetilde{\Qc}$.
Perform a mutation at $x_k$, and
denote the $2$-form associated to the quiver after the mutation by $\om'$.
Our goal is to check that $\om=\om'$.

As above, in the purely even case,
collect the terms in~$\om$ containing $x_k$;
one obtains:
$$
\om_{x_k}:=
\frac{dx_k\wedge{}d\frac{x_{J_k}}{x_{I_k}}}{x\,\frac{x_{J_k}}{x_{I_k}}}+
\sum\limits_{\substack{\xi_i\to{}x_k\to\xi_j}}
\frac{d\left(
\xi_i\xi_j
\right)\wedge{}dx_k}
{x_k},
$$
where, as above, the monomial
$x_{I_k}=x_{i_1}\cdots{}x_{i_r}$ is the product of all even coordinates connected to~$x_k$
by the ingoing arrows $(x_i\to{}x_k)$, 
and where $x_{J_k}=x_{j_1}\cdots{}x_{j_s}$ is the product of all even coordinates 
connected to $x_k$
by the outgoing arrows $(x_k\to{}x_j)$.

The exchange relation~(\ref{Mute}) can be rewritten as follows:
$$
x_k=\frac{1}{\widetilde{x_k}'}
\left(
1+\frac{x_{J_k}}{x_{I_k}}+\sum\limits_{\substack{\xi_i\to{}x_k\to\xi_j}}\xi_i\xi_j
\right),
$$
where $\widetilde{x_k}':=\frac{x'_k}{x_{I_k}}$.
Substituting this expression into $\om_{x_k}$,
and applying the Leibniz rule, one obtains
$$
\frac{dx_k\wedge{}d\frac{x_{J_k}}{x_{I_k}}}{x_k\,\frac{x_{J_k}}{x_{I_k}}}=
-\frac{d\widetilde{x_k}'\wedge{}d\frac{x_{J_k}}{x_{I_k}}}{\widetilde{x_k}'\,\frac{x_{J_k}}{x_{I_k}}}+
\sum\limits_{\substack{\xi_i\to{}x_k\to\xi_j}}\frac{
d\left(\xi_i\xi_j
\right)
\wedge{}d\frac{x_{J_k}}{x_{I_k}}}{\left(1+\frac{x_{J_k}}{x_{I_k}}\right)\frac{x_{J_k}}{x_{I_k}}},
$$
for the first term, and
$$
\begin{array}{rcl}
\displaystyle
\sum\limits_{\substack{\xi_i\to{}x_k\to\xi_j}}\frac{d\left(\xi_i\xi_j
\right)\wedge{}dx_k}{x_k}
&=&
\displaystyle
-\sum\limits_{\substack{\xi_i\to{}x_k\to\xi_j}}\frac{d\left(\xi_i\xi_j
\right)\wedge{}d\widetilde{x}'}{\widetilde{x_k}'}\\[14pt]
&&\displaystyle
+
\sum\limits_{\substack{\xi_i\to{}x_k\to\xi_j}}\frac{d\left(\xi_i\xi_j
\right)\wedge{}d\frac{x_{J_k}}{x_{I_k}}}
{1+\frac{x_{J_k}}{x_{I_k}}}
\end{array}
$$
for the second term.
Collecting the above two terms, one finally gets:
$$
\displaystyle
\begin{array}{rcl}
\om_{x_k}&=&\displaystyle
-\frac{dx_k'\wedge{}d\frac{x_{J_k}}{x_{I_k}}}{x_k'\,\frac{x_{J_k}}{x_{I_k}}}-
\sum\limits_{\substack{\xi_i\to{}x_k\to\xi_j}}\frac{d\left(\xi_i\xi_j
\right)\wedge{}dx_k'}{x_k'}\\[12pt]
\displaystyle
&&
\displaystyle+
\frac{dx_{I_k}\wedge{}dx_{J_k}}{x_{I_k}\,x_{J_k}}+
\sum\limits_{\substack{\xi_i\to{}x_k\to\xi_j}}\frac{d\left(\xi_i\xi_j
\right)\wedge{}dx_{J_k}}{x_{J_k}}.
\end{array}
$$

We need to prove that
this expression coincides with the term $\om'_{x_k'}$
in the $2$-form $\om'$.
Indeed, the first two terms with minus signs correspond to the
reversing of arrows at $x_k'$;
the third term is the additional term generated by the rule (1)
of the mutation of the classical quiver $\Qc$; 
the last term is generated by the step (1*) of the mutation of the
extended quiver $\widetilde{\Qc}$.

We have proved that $\om_{x_k}=\om'_{x_k'}$,
and hence $\om=\om'$, since all the other terms of the $2$-form remain unchanged under
the mutation at $x_k$.

Theorem \ref{SymThm} is proved.

\section{An application: extended Somos-$4$ sequence}\label{ApplSect}
The goal of this section is to show that
cluster superalgebras can have very concrete applications.
We present two sequences of numbers constructed as extensions
of the well-known {\it Somos-$4$ sequence}.
We deduce from the Laurent phenomenon for superalgebras
that all entries of these sequences are integers.

Recall that the Somos-$4$ sequence is the sequence of numbers
$(a_n)_{n\in\N}$ defined by the recurrence
$$
a_{n+4}=\frac{a_{n+1}a_{n+3}+a_{n+2}^2}{a_n},
$$
and the initial conditions $a_1=a_2=a_3=a_4=1$,
and it turns out that the sequence is integer.
The first values in the sequence are: 
$1, 1, 1, 1, 2, 3, 7, 23, 59, 314, 1529, 8209,83313,620297,7869898\ldots$

A conceptual proof of integrality,
obtained in~\cite{FZ2}, is an application of the Laurent phenomenon.
Indeed, consider one of the following quiver (that differ only by orientation):
$$
 \xymatrix{
x_4\ar@<3pt>@{->}[rd]\ar@{->}[rd]\ar@{<-}[d]&
x_1\ar@<-3pt>@{<-}[ld]\ar@{<-}[ld]\ar@{->}[d]\ar@{->}[l]\\
x_3&x_2\ar@{->}[l]\ar@<-2pt>@{->}[l]\ar@<2pt>@{->}[l]
}
\quad\hbox{or}\quad
 \xymatrix{
x_4\ar@<3pt>@{<-}[rd]\ar@{<-}[rd]\ar@{->}[d]&
x_1\ar@<-3pt>@{->}[ld]\ar@{->}[ld]\ar@{<-}[d]\ar@{<-}[l]\\
x_3&x_2\ar@{<-}[l]\ar@<-2pt>@{<-}[l]\ar@<2pt>@{<-}[l]
}
$$
Each of them performs a cyclic rotation under the mutation at $x_1$:
$$
 \xymatrix{
x_4\ar@<3pt>@{->}[rd]\ar@{->}[rd]\ar@{<-}[d]&
x_1\ar@<-3pt>@{<-}[ld]\ar@{<-}[ld]\ar@{->}[d]\ar@{->}[l]\\
x_3&x_2\ar@{->}[l]\ar@<-2pt>@{->}[l]\ar@<2pt>@{->}[l]
}
\quad
\stackrel{\mu_1}{\Longrightarrow}
\quad
 \xymatrix{
x_4\ar@<3pt>@{->}[rd]\ar@{->}[rd]\ar@{<-}[d]\ar@<-2pt>@{<-}[d]\ar@<2pt>@{<-}[d]&
x_1'\ar@<-3pt>@{->}[ld]\ar@{->}[ld]\ar@{<-}[d]\ar@{<-}[l]\\
x_3&x_2\ar@{->}[l]
}
$$
that follows immediately from the definition of quiver mutations (see Section~\ref{ClDeS}).
The exchange relation reads:
$x_1x_1'=x_2x_4+x_3^2$.
Choosing $x_5:=x_1'$, and performing an infinite cyclic process of mutations:
$x_6=x_2',x_7=x_3',\ldots$, one obtains a sequence of rational functions $x_n$ in
the initial variables $x_1,x_2,x_3,x_4$, satisfying the
Somos recurrence.
By Laurent phenomenon, every function~$x_n$ is a Laurent polynomial.
Therefore, choosing the initial values $x_1=x_2=x_3=x_4=1$,
one obtains an integer sequence.

More interesting examples of cyclically invariant quivers
and corresponding sequences can be found in~\cite{FH},
and of sequences with the Laurent property in~\cite{FZ2}
and~\cite{HW}.

Our extension of the Somos-$4$ sequence is as follows.
Consider a pair of sequences of numbers, $(a_n), (b_n)_{n\in\N}$,
that we write in the form of a linear function in a formal variable $\e$:
$$
A_n=a_n+b_n\e,
$$
such that $\e^2=0$.
This expression is known under the name of ``dual numbers''
(invented by Clifford and used by Study, Grassmann, Blaschke,... 
for geometric purposes).
The following statement is a corollary of Theorem~\ref{LeurThm}.

\begin{cor}
Suppose that the sequence $(A_n)$ satisfies the recurrence:
\begin{equation}
\label{SSomos}
A_{n+4}=\frac{A_{n+1}A_{n+3}+A_{n+2}^2\left(1+\e\right)}{A_n},
\end{equation}
with the initial conditions:
$$
A_1=A_2=A_3=A_4=1,
$$
i.e., with $a_1=a_2=a_3=a_4=1$ and $b_1=b_2=b_3=b_4=0$.
Then both sequences, $(a_n)$ and $(b_n)$, are integer.
\end{cor}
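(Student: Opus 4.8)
The plan is to realize the recurrence~\eqref{SSomos} as the exchange relation of a cluster superalgebra attached to a suitable extended quiver, and then invoke the Laurent phenomenon (Theorem~\ref{LeurThm}). The dual number $A_n = a_n + b_n\e$ with $\e^2=0$ should be thought of as living in a cluster superalgebra with a \emph{single} odd pair: if we set $\e = \xi_1\xi_2$ for two odd variables $\xi_1,\xi_2$, then $\e^2 = \xi_1\xi_2\xi_1\xi_2 = 0$ automatically, and a Laurent polynomial in $\{x_i,\xi_1,\xi_2\}$ specializes, under $x_i\mapsto 1$, to an expression of the form $(\text{integer}) + (\text{integer})\,\xi_1\xi_2$, whose two components are exactly the two sequences $(a_n)$ and $(b_n)$.

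First I would take the classical Somos-$4$ quiver displayed above (either orientation), whose mutation at $x_1$ is the cyclic rotation with exchange relation $x_1x_1' = x_2x_4 + x_3^2$, and enrich it to an extended quiver $\widetilde{\Qc}$ by adding the two odd vertices $\xi_1,\xi_2$, attached by $2$-paths so that exactly one $2$-path $(\xi_1\to x_3\to\xi_2)$ passes through the vertex $x_3$, and no other $2$-paths are present. Then the modified exchange relation~\eqref{Mute} at $x_1$ reads
$$
x_1x_1' = x_2x_4 + (1 + \xi_1\xi_2)\,x_3^2,
$$
which, writing $\e=\xi_1\xi_2$, is precisely the super-Somos recurrence~\eqref{SSomos}. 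The next step is to check that the cyclic sequence of mutations $\mu_1,\mu_2,\mu_3,\dots$ is well-defined: one must verify that the extended quiver is carried to itself (up to relabeling) by the rotation, so that at each step the $2$-path structure reproduces a single $\xi_1\xi_2$ through the vertex playing the role of $x_3$ in the recurrence, and one must verify that each of these mutations is \emph{admissible} in the sense of Section~\ref{DeFCluSS} (Condition (C) and the allowedness criterion of Lemma~\ref{AMProp}). Since there is only one odd $2$-path at a time, Condition (C) is automatic and admissibility reduces to a short check using Lemma~\ref{AMProp}, case~(c) or~(e), at the vertices not carrying the odd path.

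Once the cyclic mutation process is set up, Theorem~\ref{LeurThm} applies verbatim: every $x_n$ obtained in the process is a Laurent polynomial in $\{x_1,x_2,x_3,x_4,\xi_1,\xi_2\}$ whose denominator is a monomial in the $x_i$ only. Specializing $x_1=x_2=x_3=x_4=1$ kills all denominators and leaves an element of $\Z[\xi_1,\xi_2]$, i.e.\ an expression $a_n + b_n\,\xi_1\xi_2$ with $a_n,b_n\in\Z$; by construction $A_n = a_n+b_n\e$ satisfies~\eqref{SSomos} with the stated initial conditions, so $(a_n)$ and $(b_n)$ are both integer sequences. The main obstacle I anticipate is the bookkeeping in the middle step: confirming that the chosen placement of the odd $2$-path is genuinely preserved (with the correct orientation, after the reversals in rule~(2*)) under the cyclic rotation, and that the creation rule~(1*) does not spawn spurious extra $2$-paths through other vertices that would either violate Condition~(C) or alter the recurrence. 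This is a finite check on a $4$-vertex quiver, but it is where the argument could fail if the odd vertices are attached to the wrong even vertex; one may need to try attaching the $2$-path through a different vertex of the Somos quiver, or to allow a multiplicity, to make the rotation close up cleanly.
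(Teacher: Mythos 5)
Your overall strategy coincides with the paper's (realize \eqref{SSomos} as a super exchange relation for an extension of the Somos-$4$ quiver that rotates cyclically under mutation, then apply Theorem~\ref{LeurThm} and specialize $x_1=\cdots=x_4=1$, reading off $a_n+b_n\xi_1\xi_2$), but the concrete construction you propose does not work, and the step you yourself flag as "a finite check'' is precisely where it breaks. In Definition~\ref{ExchDef}, the odd correction in \eqref{Mute} is the sum $\sum_{\xi_i\to x_k\to\xi_j}\xi_i\xi_j$ over $2$-paths \emph{through the mutated vertex $x_k$}, and it multiplies the ingoing monomial $\prod_{x_i\to x_k}x_i$; it is not triggered by $2$-paths through the vertices appearing in that monomial. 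Hence if the only $2$-path is $(\xi_1\to x_3\to\xi_2)$, the mutation at $x_1$ gives the purely even relation $x_1x_1'=x_2x_4+x_3^2$: the factor $(1+\xi_1\xi_2)$ you wrote down simply does not appear. A second problem is that a single $2$-path cannot be preserved by the cyclic rotation: rule (1*) creates a new $2$-path through \emph{every} vertex reached by an outgoing arrow from the mutated vertex (here $x_2$ and $x_4$), so one path becomes three, and your remark that Condition (C) is automatic does not address this proliferation.

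The paper's quiver resolves both issues at once: it carries two $2$-paths, $(\xi_1\to x_1\to\xi_2)$ through $x_1$ and $(\xi_2\to x_4\to\xi_1)$ through $x_4$, on top of the classical Somos-$4$ quiver. Mutating at $x_1$, rule (1*) creates $(\xi_1\to x_2\to\xi_2)$ and $(\xi_1\to x_4\to\xi_2)$; the latter cancels by rule (3*) against the pre-existing oppositely oriented path through $x_4$, and rule (2*) reverses the path through $x_1$, so the resulting extended quiver is the original one rotated by one step. Because the $2$-path passes through the vertex being mutated, \eqref{Mute} reads $x_1x_1'=x_2x_4+(1+\xi_1\xi_2)x_3^2$, which is \eqref{SSomos} with $\e=\xi_1\xi_2$, and the Laurent phenomenon then finishes the argument exactly as you describe. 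So the missing ingredient is not bookkeeping but the correct placement and orientation of the odd $2$-paths: the $\e$ enters through the mutated vertex, not through $x_3$, and closing up the rotation requires the two-path configuration through $x_1$ and $x_4$ rather than a single path.
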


\begin{proof}
The following extended quiver:
$$
 \xymatrix{
{\color{red}\xi_1}\ar@<3pt>@{->}[rd]\ar@{<-}[d]&
{\color{red}\xi_2}\ar@<-2pt>@{->}[ld]\ar@{<-}[d]\\
x_4\ar@<3pt>@{->}[rd]\ar@{->}[rd]\ar@{<-}[d]&
x_1\ar@<-3pt>@{<-}[ld]\ar@{<-}[ld]\ar@{->}[d]\ar@{->}[l]\\
x_3&x_2\ar@{->}[l]\ar@<-2pt>@{->}[l]\ar@<2pt>@{->}[l]
}
$$
performs a cyclic rotation under the mutation at $x_1$,
cf. Definition~\ref{MutDef}.
The exchange relation~(\ref{Mute}) reads
$x_1x_1'=x_2x_4+\left(1+\xi_1\xi_2\right)x_3^2$. 
Setting $\e=\xi_1\xi_2$, and applying an infinite cyclic sequence of mutations,
one obtains a sequence of rational functions $x_n(x_1,x_2,x_3,x_4,\xi_1,\xi_2)$
satisfying the recursion~(\ref{SSomos}).
The result then follows from the Laurent phenomenon.
\end{proof}

The sequence $(a_n)$ is obviously just the Somos-$4$ sequence.
The first values of the sequence $(b_n)$ are:
$0,0,0,0,1,2,10,48,160,1273,7346,51394,645078,5477318,87284761\ldots$
This sequence cannot be found in OEIS.

\begin{rem}
Rewriting~(\ref{SSomos}) componentwise, one gets the following, more
explicit, but less pleasant recurrence for the sequence $(b_n)$:
$$
b_{n+4}=\frac{a_{n+1}b_{n+3}+
2a_{n+2}b_{n+2}+a_{n+3}b_{n+1}+a_{n+2}^2}{a_n}-
\frac{\left(a_{n+3}a_{n+1}+a_{n+2}^2\right)b_n}{a^2_n}.
$$
Yet another relation which is also equivalent to the above recurrence is:
$$
b_{n+4}a_n+b_na_{n+4}=a_{n+1}b_{n+3}+
2a_{n+2}b_{n+2}+a_{n+3}b_{n+1}+a_{n+2}^2
$$
(communicated to me by Michael Somos).
\end{rem}

Consider, furthermore, the following extended quiver:
$$
 \xymatrix{
{\color{red}\xi_1}\ar@<3pt>@{->}[rd]\ar@{<-}[d]
\ar@/^2.2pc/[rdd]&
{\color{red}\xi_2}\ar@<-2pt>@{->}[ld]\ar@{<-}[d]\ar@/^-2.2pc/[ldd]\\
x_4\ar@<3pt>@{<-}[rd]\ar@{<-}[rd]\ar@{->}[d]&
x_1\ar@<-3pt>@{->}[ld]\ar@{->}[ld]\ar@{<-}[d]\ar@{<-}[l]\\
x_3\ar@/^1.7pc/[uu]&x_2\ar@/^-1.7pc/[uu]\ar@{<-}[l]\ar@<-2pt>@{<-}[l]\ar@<2pt>@{<-}[l]
}
$$
It is not difficult to see that under the cyclic sequence of mutations,
$\mu_1,\mu_2,\mu_3,\mu_4,\mu_1,\ldots$ this quiver
also rotates cyclically, and the exchange relation~(\ref{Mute}) reads this time:
$x_1x_1'=x_3^2+\left(1+\xi_1\xi_2\right)x_2x_4$.
One then obtains another extension, $\tilde A_n=a_n+\tilde b_n\e$, of the Somos-$4$ sequence,
satisfying the recursion
$$
\tilde A_{n+4}=\frac{\tilde A_{n+2}^2+\tilde A_{n+1}\tilde A_{n+3}\left(1+\e\right)}{\tilde A_n}.
$$
It follows from Theorem~\ref{LeurThm}, that the sequence $(\tilde b_n)_{n\in\N}$ is, again, integer.
The first values of this sequence are:
$0,0,0,0,1,3,10,59,198,1387,9389,57983,752301,6851887,97297759\ldots$

\section{Variations on Fibonacci: extended Kronecker quiver}\label{FiboSect}

In this section, we give another,
more elementary illustration of the Laurent phenomenon
of cluster superalgebras.

The classical Fibonacci sequence
$(F_n)=1,\,1,\,2,\,3,\,5,\,8,\,13,\,21,\,34,\,55,\,89,\,144,\,233,\,377,\ldots$
can also be understood within the cluster algebra framework.
Consider the sequence $(a_n)$,
satisfying the recurrence
$$
a_{n+2}=\frac{a_{n+1}^2+1}{a_n},
$$
and the initial conditions $a_0=a_1=1$.
The sequence $(a_n)_{n\in\N}$
is then the bisected Fibonacci sequence:
$a_n=F_{2n-1}$.
The above recurrence
is known as Cassini's identity.

Our methods lead to a family of extensions of the sequence $(a_n)$.
As before, we consider another sequence $(b_n)$,
coupled with $(a_n)$ in the form of a sequence of dual numbers: 
$A_n=a_n+b_n\e$, where~$\e^2=0$.
One of these extensions is defined by the recurrence
\begin{equation}
\label{SupCass}
A_{n+2}A_n=A_{n+1}^2\left(1+(-1)^{\frac{(n+1)(n+2)}{2}}\e\right)+1,
\end{equation}
and the initial conditions $A_0=A_1=1$.
Equivalently,
$$
b_{n+2}a_n+b_{n}a_{n+2}=
2b_{n+1}a_{n+1}+(-1)^{\frac{(n+1)(n+2)}{2}}a_{n+1}^2,
$$
with $b_0=b_1=0$.

It turns out that the sequence $(b_n)_{n\in\N}$ then also consists of Fibonacci numbers,
but this time with even indices, and taken in a surprising order:
$$
\begin{array}{r|c|c|c|c|c|c|c|cc}
n&0&1&2&3&4&5&6&7&\cdots\\[2pt]
\hline
a_n&1&1&2&5&13&34&89&233&\cdots\\[2pt]
\hline
b_n&0&0&1&8&21&21&55&377&\cdots
\end{array}
$$
More precisely, 
$$
b_n=\left\{
\begin{array}{ll}
F_{2n},&n\equiv0,3\mod4,\\
F_{2n-2},&n\equiv1,2\mod4.
\end{array}
\right.
$$
Our next goal is to explain the origin of this sequence and its relation
to cluster superalgebra.

The classical sequence $(a_n)$ can be generated by consecutive mutations
$\mu_0,\mu_1,\mu_0,\mu_1,\ldots$ of the quiver
with two vertices and two arrows:
$$
 \xymatrix{
x_0&x_1\ar@{->}[l]\ar@<2pt>@{->}[l]
},
$$
called the $2$-{\it Kronecker quiver}\footnote{
Note that the Fibonacci sequence is also related to the 
$3$-Kronecker quiver, see~\cite{FR}.}.
Indeed, the quiver mutation is just the orientation reversing:
$$
 \xymatrix{
x_0&x_1\ar@{->}[l]\ar@<2pt>@{->}[l]
}
\quad
\stackrel{\mu_0}{\Longrightarrow}
\quad
 \xymatrix{
x_0'&x_1\ar@{<-}[l]\ar@<2pt>@{<-}[l]
},
$$
while the exchange relations read:
$
x_0x_0'=x_1^2+1.
$

Let us consider the following extended Kronecker quiver:
$$
 \xymatrix{
{\color{red}\xi_0}\ar@<3pt>@{->}[rd]\ar@{->}[d]&
{\color{red}\xi_1}\ar@<-2pt>@{<-}[ld]\ar@{<-}[d]\\
x_0&x_1\ar@{->}[l]\ar@<2pt>@{->}[l]
}
$$
The sequence of consecutive mutations of this
extended quiver is $4$-periodic:
$$
\begin{array}{ccc}
 \xymatrix{
{\color{red}\xi_0}\ar@<3pt>@{->}[rd]\ar@{->}[d]&
{\color{red}\xi_1}\ar@<-2pt>@{<-}[ld]\ar@{<-}[d]\\
x_0&x_1\ar@{->}[l]\ar@<2pt>@{->}[l]
}
&
\stackrel{\mu_0}{\Longrightarrow}
&
 \xymatrix{
{\color{red}\xi_0}\ar@<3pt>@{->}[rd]\ar@{<-}[d]&
{\color{red}\xi_1}\ar@<-2pt>@{->}[ld]\ar@{<-}[d]\\
x_0'&x_1\ar@{<-}[l]\ar@<2pt>@{<-}[l]
}\\[50pt]
\Uparrow\mu_1&&\Downarrow\mu_1\\[10pt]
 \xymatrix{
{\color{red}\xi_0}\ar@<3pt>@{<-}[rd]\ar@{->}[d]&
{\color{red}\xi_1}\ar@<-2pt>@{<-}[ld]\ar@{->}[d]\\
x_0''&x_1'\ar@{<-}[l]\ar@<2pt>@{<-}[l]
}
&
\stackrel{\mu_0}{\Longleftarrow}
&
 \xymatrix{
{\color{red}\xi_0}\ar@<3pt>@{<-}[rd]\ar@{<-}[d]&
{\color{red}\xi_1}\ar@<-2pt>@{->}[ld]\ar@{->}[d]\\
x_0'&x_1'\ar@{->}[l]\ar@<2pt>@{->}[l]
}
\end{array}
$$
The exchange relations are:
$$
x^{(n+1)}_0x^{(n)}_0
={x^{(n)}_{1}}^2\left(1+(-1)^{n}\xi_0\xi_1\right)+1,
\qquad
x^{(n+1)}_1x^{(n)}_1=
{x^{(n+1)}_{0}}^2\left(1+(-1)^{n}\xi_0\xi_1\right)+1,
$$
which corresponds to the recurrence~(\ref{SupCass}) with $\e=\xi_0\xi_1$.

More generally, starting with the extended Kronecker quiver
with two odd variables related by an arbitrary number $k$ $2$-path through $x_0$,
and $\ell$ $2$-path through $x_1$, one obtains a two-parameter family
of integer sequences extending $(a_n)$:
$$
\begin{array}{r|c|c|c|c|c|c|c|cc}
n&0&1&2&3&4&5&6&7&\cdots\\[2pt]
\hline
a_n&1&1&2&5&13&34&89&233&\cdots\\[2pt]
\hline
b_n&0&0&k&4(\ell+k)&20\ell+k&43\ell-22k&88\ell-33k&399\ell-22k&\cdots
\end{array}
$$
The previously considered case of $(b_n)$ given by Fibonacci numbers,
corresponds to $k=\ell=1$.

\bigskip
{\bf Acknowledgements}.
I am grateful to Sophie Morier-Genoud and Sergei Tabachnikov 
for a number of fruitful discussions,
our joint work on superfriezes~\cite{SFriZ} is the first step of the present work.
I am grateful to Michael Somos for his interest, enlightening discussions and the computer programs.
I am also pleased to thank 
Fr\'ed\'eric Chapoton, Sergei Fomin,
Michael Gekhtman, Dimitry Leites, Yann Palu and
Xiuling Wang
 for interesting discussions. 
Special thanks to Sophie Morier-Genoud and Dimitry Leites
 for a careful reading of the first version of this paper.
This work was partially supported by the PICS05974 ``PENTAFRIZ'' of CNRS.


\begin{thebibliography}{99}

\bibitem{Phy} 
M. Alim, S. Cecotti, C. C\'ordova, S. Espahbodi, A. Rastogi, C. Vafa, 
$N=2$ {\it quantum field theories and their BPS quivers}.
Adv. Theor. Math. Phys. {\bf 18} (2014), 27--127.

\bibitem{BFZ} 
A. Berenstein, S. Fomin, and A. Zelevinsky. 
{\it Cluster algebras. III. Upper bounds and double Bruhat cells.} 
Duke Math. J., {\bf 126} (2005),1--52.

\bibitem{BZ} 
A. Berenstein, A. Zelevinsky. 
{\it Quantum cluster algebras.} 
Quantum cluster algebras
Adv. Math. 195 (2005), no. 2, 405--455.

\bibitem{BD}
D. Berenstein, M. Douglas,
{\it Seiberg Duality for Quiver Gauge Theories},
arXiv:hep-th/0207027.


\bibitem{Ber} 
F. Berezin, 
 Introduction to superanalysis. 
 Mathematical Physics and Applied Mathematics, {\bf 9}. 
 D. Reidel Publishing Co., Dordrecht, 1987.
 
\bibitem{BT} 
 A. B. Buan and H. Thomas, 
 {\it Coloured quiver mutation for higher cluster categories}, 
 Adv. Math. {\bf 222} (2009) 971--995.

\bibitem{CaCh} 
P. Caldero, F. Chapoton, 
{\it Cluster algebras as Hall algebras of quiver representations}, 
Comment. Math. Helv. {\bf 81} (2006), 595--616.

\bibitem{CoCo}
J. H. Conway, H. S. M. Coxeter, 
{\it Triangulated polygons and frieze patterns},
Math. Gaz. {\bf 57} (1973), 87--94 and 175--183.

\bibitem{Cox} 
H. S. M. Coxeter. 
{\it Frieze patterns}.  Acta Arith.  {\bf 18}  (1971), 297--310.

\bibitem{DM} 
P. Deligne, J. Morgan, 
{\it Notes on supersymmetry (following Joseph Bernstein)},
Quantum fields and strings: a course for mathematicians, Vol. 1, 2 (Princeton, NJ, 1996/1997), 
41--97, Amer. Math. Soc., Providence, RI, 1999.

\bibitem{FR} 
P. Fahr, C. Ringel, 
{\it Categorification of the Fibonacci numbers using representations of quivers},
J. Integer Seq. {\bf 15} (2012), Article 12.2.1, 12 pp.

\bibitem{FG} 
V. Fock, A. Goncharov,
{\it Moduli spaces of local systems and higher Teichm\"uller theory}, 
Publ. Math. Inst. Hautes Etudes Sci. {bf 103} (2006), 1--211.

\bibitem{FM} 
V.V. Fock, A. Marshakov,
{\it Loop groups, Clusters, Dimers and Integrable systems},
arXiv:1401.1606.

\bibitem{FZ1}
S. Fomin, A. Zelevinsky, 
{\it Cluster algebras. I. Foundations. }
J. Amer. Math. Soc.  {\bf 15}  (2002),   497--529.

\bibitem{FZ2}
S. Fomin, A. Zelevinsky, 
{\it The Laurent phenomenon.}
Adv. in Appl. Math.  {\bf 28}  (2002),   119--144.

\bibitem{FZ3}
S. Fomin, A. Zelevinsky, 
{\it Cluster algebras. II. Finite type classification},
Invent. Math. {\bf 154} (2003), 63--121.

\bibitem{FH}
A. Fordy, A. Hone, 
{\it Discrete integrable systems and Poisson algebras from cluster maps},
Comm. Math. Phys. {\bf 325} (2014), 527--584.

\bibitem{GSTV1}
M. Gekhtman, M. Shapiro, S. Tabachnikov, A. Vainshtein, 
{\it Integrable cluster dynamics of directed networks and pentagram maps}.
Advances in Math. {\bf 300} (2016), 390--450.

\bibitem{GSV1}
M. Gekhtman, M. Shapiro, A. Vainshtein, 
{\it Cluster algebras and Weil-Petersson forms},
Duke Math. J. {\bf 127} (2005), 291--311;
and
{\it Correction to ``Cluster algebras and Weil-Petersson forms''},
Duke Math. J. {\bf 139} (2007), 407--409.

\bibitem{GSV2}
M. Gekhtman, M. Shapiro, A. Vainshtein, 
{\it Cluster structures on simple complex Lie groups and Belavin-Drinfeld classification.} 
Mosc. Math. J., {\bf 12} (2012), 293--312.

\bibitem{GSV}
M. Gekhtman, M. Shapiro, A. Vainshtein, 
{\it Cluster algebras and Poisson geometry. }
 Amer. Math. Soc., Providence, RI, 2010.

\bibitem{Gli}
M. Glick, 
{\it The pentagram map and $Y$-patterns}, Adv. Math. 
{\bf 227} (2011), 1019--1045. 

\bibitem{GK}
A. Goncharov, R. Kenyon, 
{\it Dimers and cluster integrable systems}, Ann. Sci. Ec. Norm. Super. (4) 
{\bf 46} (2013), 747--813.

\bibitem{HW}
A. Hone, C. Ward, 
{\it A family of linearizable recurrences with the Laurent property},
Bull. Lond. Math. Soc. {\bf 46} (2014), 503--516.

\bibitem{Kr}
I. Krichever,
{\it Commuting difference operators and the combinatorial Gale transform},
Func. Anal. Appl.
{\bf 49} (2015), 175--188.

\bibitem{LS}
K. Lee, R. Schiffler,
{\it Positivity for cluster algebras}, Annals of Math.
{\bf 182} (2015), 73--125.

\bibitem{Lei}
D.A. Leites,
{\it Introduction to the theory of supermanifolds}, 
Russian Math. Surveys, {\bf 35} (1980), 1--64.

\bibitem{LeiR}
D. Leites (ed.), 
Seminar on supersymmetry (v. 1. Algebra and Calculus: Main chapters), (J. Bern-
stein, D. Leites, V. Molotkov, V. Shander). MCCME, Moscow, 2011, 410 pp. 
(in Russian; a version in English is in preparation but available for perusal).

\bibitem{Man}
Yu. Manin, 
Topics in noncommutative geometry, 
Princeton Univ. Press, 1991.

\bibitem{Man1}
Yu. Manin, 
Gauge field theory and complex geometry, 
Grundlehren der Mathematischen Wissenschaften, 289, Springer-Verlag, Berlin, 1988.

\bibitem{Mar}
R. Marsh, 
Lecture notes on cluster algebras, Zurich Lectures in Advanced Mathematics. 
European Mathematical Society (EMS), Z\"urich, 2013.

\bibitem{MP}
R. Marsh, Y. Palu, 
{\it Coloured quivers for rigid objects and partial triangulations: the unpunctured case}, 
Proc. Lond. Math. Soc. (3) {\bf 108} (2014), 411--440.

\bibitem{MG} 
S. Morier-Genoud,
{\it Arithmetics of 2-friezes}, J. Algebraic Combin. {\bf 36} (2012),  515--539.

\bibitem{Mor} 
S. Morier-Genoud,
{\it Coxeter's frieze patterns at the crossroads of algebra, geometry and combinatorics},
Bull. Lond. Math. Soc. {\bf 47} (2015), 895--938.

\bibitem{SVRS} 
S. Morier-Genoud, V. Ovsienko, R. Schwartz, S. Tabachnikov, 
{\it Linear difference equations, frieze patterns and combinatorial Gale transform,} 
Forum of Mathematics, Sigma, Vol.  {\bf 2}  (2014), e22 (45 pages).

\bibitem{MGOT} 
S. Morier-Genoud, V. Ovsienko, S. Tabachnikov. 
{\it 2-Frieze patterns and the cluster structure of the space of polygons}, 
Ann. Inst. Fourier {\bf 62} (2012), 937--987.

\bibitem{SFriZ} 
S. Morier-Genoud, V. Ovsienko, S. Tabachnikov. 
{\it Introducing supersymmetric frieze patterns and linear difference operators}, 
Math. Z. {\bf 281} (2015), 1061--1087.

\bibitem{OST} 
V. Ovsienko, R. Schwartz, S. Tabachnikov, 
{\it The pentagram map: A discrete integrable system,} 
Comm. Math. Phys. {\bf 299} (2010), no. 2, 409--446.

\bibitem{Sco}
J. S. Scott,
{\it Grassmannians and cluster algebras.} Proc. London Math. Soc. (3),
{\bf 92} (2006), 345--380.

\bibitem{Wil}
L. Williams, 
{\it Cluster algebras: an introduction.} 
Bull. Amer. Math. Soc. (N.S.) {\bf 51} (2014), 1--26.

\bibitem{Wit}
E. Witten, 
{\it Notes On Super Riemann Surfaces And Their Moduli},
arxiv.1209.2459.

\bibitem{Zab}
A. Zabrodin, 
{\it Hirota difference equations},
Theoret. and Math. Phys. {\bf 113} (1997), 1347--1392.

\end{thebibliography}
\end{document}